 \newcounter{foo}	
\newtheorem{theorem}[foo]{Theorem}
\newtheorem{lemma}[foo]{Lemma}
\newtheorem{definition}[foo]{Definition}
\renewenvironment{proof}{{\noindent\bfseries Proof.}}{$\qed$\\}
\newcommand{\id}[1]{\text{\textnormal{id}}(#1)} 
\newcommand{\rid}[1]{\text{\textnormal{rid}}(#1)}  
\newcommand{\vstar}{{*}}	
\newcommand{\vto}{\leftrightarrow}	
\author{Vahid Fazel-Rezai}
\title{Equivalence Classes of Permutations Modulo Replacements Between 123 and Two-Integer Patterns}
\begin{document}	 
\maketitle

\abstract{We explore a new type of replacement of patterns in permutations, suggested by James Propp, that does not preserve the length of permutations. In particular, we focus on replacements between 123 and a pattern of two integer elements. We apply these replacements in the classical sense; that is, the elements being replaced need not be adjacent in position or value. Given each replacement, the set of all permutations is partitioned into equivalence classes consisting of permutations reachable from one another through a series of bi-directional replacements. We break the eighteen replacements of interest into four categories by the structure of their classes and fully characterize all of their classes.}

\section{Introduction}
\label{sec:introduction}

A permutation is said to contain a pattern if it has a subpermutation order-isomorphic to the pattern. Modern study of permutation patterns was prompted by Donald Knuth in the form of stack-sortable permutations in \cite{knuth68}, but has since evolved into an active combinatorial field. (For various other applications and motivations see Chapters 2 and 3 in the book by Sergey Kitaev \cite{kitaev11}.) Much of the work on permutation patterns has dealt with counting permutations that contain or avoid certain patterns.

The notion of replacing a pattern in a permutation with a new pattern was first mentioned in different forms, as the plactic and Chinese monoids, by Alain Lascoux and Marcel P. Sch\"{u}tzenburger in \cite{LS81}, G\'{e}rard Duchamp and Daniel Krob in \cite{DK92}, and Julien Cassaigne et al.\ in \cite{CEKNH01}. These have since been translated into the language of pattern replacements and further studied. Steven Linton et al.\ consider in \cite{LPRW12} several bi-directional pattern replacements between $123$ and another pattern of length 3, as well as cases where multiple such replacements are allowed at the same time. They inspect these replacements when applied to elements in general position, elements with adjacent positions, and elements with adjacent positions and adjacent values. A couple papers, \cite{PRW11}, by James Propp et al., and \cite{kuszmaul13}, by William Kuszmaul, follow up on replacements on elements with adjacent positions. Together these three papers enumerate the equivalence classes in a general $S_n$ and count the size of the class containing the identity for almost all cases. In addition, William Kuszmaul and Ziling Zhou examine in \cite{KZ13} equivalence classes under more general families of replacements. Throughout all of this work, permutation length was preserved under replacements.

James Propp has suggested considering pattern replacements that do not preserve permutation length; that is, when a pattern is replaced with another pattern of different length. This paper takes the first step in this new direction by examining a group of replacements between patterns with three integer elements and two integer elements. We choose to use the classical type of replacement in which replaced elements need not be adjacent in position or value. To accommodate patterns of different lengths, we use a modified definition of patterns that includes the character $\vstar$ in place of certain integer elements, acting as a placeholder in the replacement procedure. Like previous works, we define equivalence as reachability through a series of bi-directional replacements, using which we can partition the set of permutations of all lengths into equivalence classes.

In particular, in this paper we investigate the equivalence classes for the 18 replacements of the form $123 \vto \beta$, where $\beta$ contains exactly one $\vstar$ and two integers. First, we provide an overview of relevant definitions and notations in Section~\ref{sec:definitions}. Then, we break the 18 replacements into four categories and spend each of Sections~\ref{sec:betaDecreasing}, \ref{sec:dropOnly}, \ref{sec:shiftRightShiftLeft}, and \ref{sec:switchNeighborDrop} dealing with one of these categories. We fully characterize all equivalence classes for each of the considered replacements.

\section{Definitions}
\label{sec:definitions}

We will use the standard definition of a permutation.

\begin{definition}
\label{def:permutation}
A \textbf{permutation} $\pi$ is a finite, possibly empty string consisting of the first $n$ positive integers. We refer to $n$ as the the \textbf{length} of the permutation, denoted by $|\pi|$.
\end{definition}

The permutation of length $0$ is the empty permutation, denoted by $\emptyset$. We will refer to the identity permutation of length $n$, $123\dots n$ (or $\emptyset$ if $n=0$), by $\id{n}$ and the reverse identity permutation of length $n$, $n(n-1)(n-2)\dots 1$ (or $\emptyset$ if $n=0$), by $\rid{n}$. 

We will also mention here a term that will emerge in Section \ref{sec:switchNeighborDrop}:

\begin{definition}
\label{def:leftToRightMinimum}
An element of a permutation is a \textbf{left-to-right minimum} if it has a value less than every element to its left. 
\end{definition}

The terms left-to-right maximum, right-to-left minimum, and right-to-left maximum are defined similarly.

We now introduce the classical notion of a pattern. 

\begin{definition}
\label{def:pattern}
Let $\pi$ and $\mu$ be permutations. A substring $p$ of $\pi$ forms a \textbf{copy} of the \textbf{pattern} $\mu$ if it is order-isomorphic to $\mu$. If such a substring exists, $\pi$ \textbf{contains}~$\mu$. Otherwise, $\pi$ \textbf{avoids}~$\mu$.
\end{definition}

The definition of patterns must be extended for our purposes to accommodate patterns that may contain $\vstar$.

\begin{definition}
\label{def:starPattern}
Let $\rho$ and $\delta$ be strings each consisting of distinct positive integers and $\vstar$'s. A substring $r$ of $\rho$ forms a \textbf{copy} of the \textbf{$\vstar$-pattern} $\delta$ if the following conditions are met:
\begin{itemize}
\item $r$ and $\delta$ have stars in the same positions, and
\item $r$ and $\delta$, when ignoring all stars, are order-isomorphic to one another.
\end{itemize}
If a copy of $\delta$ in $\rho$ exists, $\rho$ \textbf{contains}~$\delta$. Otherwise, $\rho$ \textbf{avoids}~$\delta$.
\end{definition}

We take interest in replacements of patterns in permutations, that are not necessarily adjacent, to form new permutations of possibly different lengths. We define replacements using $\vstar$-patterns to be able to work with changes in length:

\begin{definition}
\label{def:replacement}
Let $\alpha$ and $\beta$ be $\vstar$-patterns of equal length. Given a permutation $\pi$, we say another permutation $\sigma$ is a result of the \textbf{replacement} $\alpha \to \beta$ on $\pi$ if $\sigma$ can be obtained from the following steps on $\pi$:
\begin{enumerate}
\item As necessary, renumber the integers in $\pi$, while preserving relative order, and add instances of $\vstar$ anywhere. Call this result $\rho^{(1)}$.
\item Choose some substring $a$ of $\rho^{(1)}$ that forms a copy of $\alpha$. Also, choose a string $b$ of distinct positive integers and $\vstar$'s such that the following are true:
\begin{itemize}
\item $b$ is itself a copy of $\beta$,
\item all elements common to both $b$ and $\rho^{(1)}$ are contained in $a$, and
\item for all $x\in \mathbb{N}$ contained in both $\alpha$ and $\beta$, if $y\in \mathbb{N}$ in $a$ is at the same position as $x$ in $\alpha$, then $y$ is in $b$ at the same position as $x$ in $\beta$.
\end{itemize}
Replace $a$ in $\rho^{(1)}$ with $b$ and call the result~$\rho^{(2)}$.
\item Drop all instances of $\vstar$ in $\rho^{(2)}$ and renumber, while preserving relative order, so that the final result is a permutation $\rho^{(3)}=\sigma$.
\end{enumerate}
\end{definition}

For example, the intermediate results in applying the replacement $123 \to 3\vstar2$ to the pattern $125$ in $14253$ would be $\rho^{(1)}=14253$, $\rho^{(2)}=54\vstar23$, and $\rho^{(3)}=4312$, respectively. Note that both $\alpha$ and $\beta$ can contain $\vstar$, so, for example, applying $12\vstar \to 3\vstar2$ to $12$ in $14253$ could have intermediate steps $\rho^{(1)} = 1526\vstar3$, $\rho^{(2)} = 45\vstar623$, and $\rho^{(3)}=34512$. Finally, also note that, in some cases, applying a specific replacement to a certain substring of a given permutation can different results depending on the choice of $b$ in the second step.

For clarity, in this paper we will show alongside replacements the involved substrings in the original and resulting permutations with square brackets. In our previous example, this would be~$[125 \to 41]$. Note that in the above definition it is possible that $|\pi| \not= |\sigma|$; that is, replacements do not necessarily preserve length. 

We now introduce the notion of equivalence between permutations of possibly different lengths using two directions of a replacement.

\begin{definition} 
\label{def:equivalent}
We call two permutations $\pi$ and $\sigma$ \textbf{equivalent}, written $\pi \equiv \sigma$, under the bi-directional replacement $\alpha \vto \beta$ if $\sigma$ can be attained through a sequence of $\alpha \to \beta$ and $\beta \to \alpha$ replacements on~$\pi$.
\end{definition}

We use this definition of equivalence to partition the set of all permutations, $S_0 \cup S_1 \cup S_2 \cup \cdots$, into equivalence classes. Our aim is to eventually characterize these classes.

Sometimes we find that it is impossible to apply a given replacement to a permutation, so that it is in its own class, which we will refer to by the following term.

\begin{definition}
\label{def:isolated}
The permutation $\pi$ is \textbf{isolated} under a replacement $\alpha \vto \beta$ if the equivalence class containing it has no other permutations.
\end{definition}

The following property, which arises in particular in Section~\ref{sec:betaDecreasing}, if established gives great insight into the structure of equivalence classes.

\begin{definition}
\label{def:unraveling}
We say a replacement $\alpha \vto \beta$ has the \textbf{unraveling property} if any given permutation is equivalent under $\alpha \vto \beta$ to an identity permutation.
\end{definition}

It is notable that if a replacement has the previous property, then there is at most one class per identity permutation.

It will be helpful in Sections~\ref{sec:dropOnly} and \ref{sec:switchNeighborDrop}
to talk about the shortest permutation equivalent to some given permutation under a replacement, for which we have the following definition.

\begin{definition}
\label{def:primitive}
The \textbf{primitive permutation} $\tau$ of $\pi$ under a replacement $\alpha \vto \beta$ is the unique permutation of shortest length equivalent to $\pi$, if it exists.
\end{definition}

Note that for some permutations and replacements, a shortest equivalent permutation might not be unique, so in such cases we say that a primitive permutation does not exist. 

Finally, we briefly note a symmetry that effectively cuts the number of distinct cases in half: if $\beta$ and $\gamma$ are reverse complements of one another, then $\pi$ and  $\sigma$ are equivalent under $123 \vto \beta$ if and only if their reverse complements are equivalent under $123 \vto \gamma$. (Here reverse means flipped order of elements and complement means flipped value of elements.) For example, because $2314 \equiv 231$ under $123 \vto 13\vstar$, we have $1423 \equiv 312$ under $123 \vto \vstar13$. Note that this symmetry is due to the fact that $123$ is its own reverse complement.

In the remainder of this paper we examine equivalence classes of replacements of the form $123 \vto \beta$, where $\beta$ contains two of $\{ 1,2,3 \}$ and one $\vstar$ in some order. We cover the cases in which the integer elements of $\beta$ are in decreasing order in Section \ref{sec:betaDecreasing}. Then, in Section \ref{sec:dropOnly} we analyze cases where the two integer elements in $\beta$ are in the same positions as in 123. In Section \ref{sec:shiftRightShiftLeft} we consider when the two integer elements of $\beta$ are both shifted left or right one from their positions in 123. We deal with the four remaining cases in Section \ref{sec:switchNeighborDrop}.

\section{$\beta$ Decreasing}
\label{sec:betaDecreasing}

In this section we characterize the classes of the nine replacements in which $\beta$ has integer elements in decreasing order. We will use $123 \vto 31$ to represent an arbitrary replacement out of the three replacements $123 \vto \vstar 31$, $123 \vto 3\vstar1$, and $123 \vto 31\vstar $. Similarly, we use $123 \vto 32 $ to simultaneously discuss all three of $123 \vto \vstar 32 $, $123 \vto 3\vstar 2 $, and $123 \vto 32\vstar $. For analyzing $123 \vto 21$, which denotes $123 \vto \vstar21$, $123 \vto 2\vstar1$, and $123 \vto 21\vstar$, we will make use of reverse complement symmetries with $123 \vto 32$. 

Under all nine replacements, descents are allowed to be rearranged into increasing order, which naturally suggests that they have the unraveling property. This is indeed the case:
\begin{lemma} 
\label{lem:unraveling} 
If $\beta$ is decreasing, then $123 \vto \beta$ has the unraveling property. 
\end{lemma}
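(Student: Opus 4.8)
The plan is to prove the unraveling property directly from its definition: I will show that every permutation is equivalent under $123 \vto \beta$ to an identity permutation. The first move is to use the reverse-complement symmetry recorded above to reduce the work. Since the three $32$-replacements $\vstar 32, 3\vstar 2, 32\vstar$ are the reverse complements of the three $21$-replacements, and since the reverse complement of any identity is again an identity, it suffices to establish the property for the families $123 \vto 31$ and $123 \vto 32$; the $123 \vto 21$ case then follows formally. For each of these two families I would argue by well-founded induction on the pair $\bigl(\mathrm{inv}(\pi),|\pi|\bigr)$ ordered lexicographically, where $\mathrm{inv}(\pi)$ is the number of inversions of $\pi$. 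The base case is immediate: a permutation with $\mathrm{inv}(\pi)=0$ is exactly an identity.

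For $123 \vto 31$ the engine is a single reverse replacement $31 \to 123$ applied to one carefully chosen inversion. If the maximal value $n$ of $\pi$ is not already in the last position, I would take the descent formed by $n$ (playing the role of ``$3$'') together with the final entry $\pi_{|\pi|}$ (playing ``$1$''), insert a $\vstar$ as permitted by step~1 of Definition~\ref{def:replacement}, and replace with a copy of $123$ whose new middle value is chosen just below $n$. The effect is to send $n$ to the very last position while introducing one new near-maximal entry. A short computation, isolating the pair $(n,\pi_{|\pi|})$ and the newly inserted value, shows that the number of inversions drops by $1+2g$, where $g$ is the number of entries positionally between $n$ and $\pi_{|\pi|}$ whose value exceeds $\pi_{|\pi|}$; in particular $\mathrm{inv}(\pi)$ strictly decreases, so the lexicographic measure falls and the inductive hypothesis applies. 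If instead $n$ already occupies the last position, then $\pi = \pi' n$ with $|\pi'|<|\pi|$ and $\mathrm{inv}(\pi')=\mathrm{inv}(\pi)$, so the measure still decreases through the length; here I would conclude by carrying the trailing maximum along a sequence of replacements that unravels $\pi'$.

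For $123 \vto 32$ I would run the mirror-image argument. Now the retained integers are the two largest (``$3$'' and ``$2$'') and the replacement inserts a brand-new smallest value (``$1$''). Taking $\pi_1$ as the ``$3$'' and the global minimum as the ``$2$'', valid whenever the minimum is not already first, and inserting a new value below everything sends the minimum to the front; by the symmetric inversion count $\mathrm{inv}(\pi)$ again strictly decreases. When the minimum is already in position~$1$ I would strip it and recurse exactly as in the maximal case. The $123 \vto 21$ family then follows from the reverse-complement symmetry, giving all nine replacements.

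The main obstacle is the bookkeeping that the chosen move genuinely lowers $\mathrm{inv}(\pi)$, carried out uniformly across the three possible positions of $\vstar$ within $\beta$. Because step~1 of Definition~\ref{def:replacement} lets us insert the star anywhere subject only to its prescribed slot in the three-element copy, the newly inserted value lands in a slightly different place for $\vstar 31$, $3\vstar 1$, and $31\vstar$ (and likewise throughout the $32$-family), so the inversion count must be checked in each configuration. I expect each computation to go through, but verifying all of them, together with justifying the stripping step, i.e.\ that a trailing maximum or a leading minimum can be ignored while the remaining entries are unravelled, is the part demanding the most care. Once these local inversion-decreasing moves are in hand, the lexicographic induction is routine.
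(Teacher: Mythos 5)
Your overall strategy (reduce via reverse complement, then run a well-founded induction on $(\mathrm{inv}(\pi),|\pi|)$ driven by single reverse replacements that lower the inversion count) is sound in spirit, but the quantitative claim at its core fails in exactly the place you flagged as needing verification. The position of the newly inserted value is not free: it is dictated by the slot of $\vstar$ in $\beta$, and for $\beta=\vstar31$ your move can \emph{increase} the number of inversions. Concretely, take $\pi=4123$ under $123\vto\vstar31$. Your move pairs the maximum $4$ with the last entry $3$; the replacement $\vstar31\to123$ forces the small value into the star slot (in front), the large value to the old position of $3$, and the new middle value into the old position of $4$, i.e.\ ahead of the entries $1$ and $2$. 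The unique result is $34125$, which has $4$ inversions while $4123$ has $3$, so the measure goes up (any admissible choice of the inserted value gives the same permutation after renumbering). The claimed drop of $1+2g$ (here $g=0$) is thus false for $\vstar31$; it is correct only for $3\vstar1$ and $31\vstar$, where the star slot can be placed next to the receiving end so the new element creates no inversions. The mirror failure occurs in the $32$-family: under $123\vto32\vstar$ your move applied to $\pi=2341$ (pairing $\pi_1=2$ with the minimum $1$) leaves the minimum stranded at position $4$ behind $3$ and $4$, producing $14523$, again $3\to4$ inversions. So two of the six configurations genuinely break, and since your reduction obtains the $21$-family from the $32$-family by reverse complement ($\vstar21$ is paired with $32\vstar$), the failure propagates to $\vstar21$ as well.

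The gap is repairable, but it needs a different move, not just more careful bookkeeping: either tailor the move to the star position (for $\vstar31$, send the minimum to the front rather than the maximum to the end; alternatively note that $\vstar31$ and $31\vstar$ are reverse complements of one another, so one case can be traded for the other, and similarly $32\vstar$ trades with $\vstar21$), or sidestep the issue entirely as the paper does. The paper inducts on length: it first unravels the last $n$ entries by the inductive hypothesis, so the permutation becomes $k\,1\,2\cdots$, and then repeatedly applies the reverse replacement to an \emph{adjacent} descent at the front, $[k1\to12(k+1)]$. Because the two replaced elements are adjacent, the star slot can be put immediately before, between, or after them, and all three choices yield the identical permutation; this single observation makes one computation valid uniformly for all nine replacements and eliminates any need for inversion counting.
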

\begin{proof}
The following proof is valid for any of $123 \vto 31$ or $123 \vto 32$. This will then cover $123 \vto 21$ by the reverse complement symmetry.

We proceed by inducting on the length of the permutation over the nonnegative integers. For the base case of length zero, we note that the only such permutation, $\emptyset$, is itself an identity permutation. Assume for the inductive step that any permutation of length $n$ is equivalent to some identity permutation and consider any permutation $\pi$ of length $n+1$. By the inductive hypothesis, we may apply replacements on the last $n$ integers of $\pi$ so that they become an increasing string of $m$ integers. Suppose the first element in this result is $k$. Then, we have
\begin{align*}
\pi & \equiv k 123\dots(k-1)(k+1)\dots (m+1) \\
& \equiv 12(k+1)34\dots(k)(k+2)\dots (m+2) \tag*{[$k1 \to 12(k+1)$]} \\
& \equiv 1234(k+2)5\dots(k+1)(k+3)\dots (m+3) \tag*{[$(k+1)3 \to 34(k+2)$]}  \\
& \qquad \vdots \tag{a total of $k-1$ replacements} \\
& \equiv 12345\dots(2k-2)(2k-1)(2k)\dots (m+k),
\end{align*}
or $\pi \equiv 123\dots(m+k) \equiv \id{m+k}$, as desired.
 \end{proof}

Now we turn our attention to only the identity permutations. Here we must deal with $123 \vto 31$ separately:

\begin{lemma} 
\label{lem:id31} 
Under $123\vto 31$, all identity permutations of length 4 or greater are equivalent to one another. 
\end{lemma}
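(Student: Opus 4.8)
The plan is to reduce the lemma to the single-step claim that $\id{m} \equiv \id{m+1}$ for every $m \ge 4$; since equivalence is transitive, chaining these collapses $\{\id{n} : n \ge 4\}$ into one class. I would first record a symmetry reduction among the three replacements abbreviated by $123 \vto 31$. The pattern $3\vstar1$ is its own reverse complement, while $\vstar31$ and $31\vstar$ are reverse complements of each other. Because $\id{k}$ is fixed by the reverse-complement operation, the symmetry noted in Section~\ref{sec:definitions} shows that the statement for $31\vstar$ follows from the statement for $\vstar31$. Hence it suffices to produce explicit chains for the two representatives $3\vstar1$ and $\vstar31$.

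The conceptual point to confront first is that one cannot apply $31 \to 123$ directly to an identity: an identity has no descent, so no copy of the $\vstar$-pattern ``$31$'' can be found even after renumbering and inserting stars. Thus any path leaving an identity must begin with a length-decreasing $123 \to 31$ step, and lengthening must be postponed until a descent has been manufactured. Accordingly, I would connect $\id{m+1}$ to $\id{m}$ by descending twice and then ascending once, passing through the two intermediates
\begin{align*}
C_m &= 1\,3\,2\,4\,5\,\cdots\,m, \\
D_{m-1} &= 3\,2\,1\,4\,5\,\cdots\,(m-1),
\end{align*}
via the scheme $\id{m+1} \equiv C_m \equiv D_{m-1} \equiv \id{m}$.

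Concretely, I would verify three links. First, applying $123 \to 31$ to the triple in positions $2,3,4$ of $\id{m+1}$ deletes the middle value and, after renumbering, yields $C_m$. Second, applying $123 \to 31$ to the values $1,3,4$ of $C_m$ (in positions $1,2,4$) yields $D_{m-1}$. Third, and most delicately, I would apply $31 \to 123$ to the outer descent formed by the leading $3$ and the $1$ of $D_{m-1}$: here the procedure in Definition~\ref{def:replacement} leaves the inserted integer free to be placed in either gap, and steering that choice so that the new values interleave correctly renumbers the result exactly to $\id{m}$. Exploiting this non-uniqueness in the choice of $b$ is the crux of the argument. The same three links go through for $\vstar31$: the two descending steps produce the identical intermediates $C_m$ and $D_{m-1}$ (the deleted element is the middle value regardless of where the $\vstar$ sits), and only the placement of the star and of the inserted integer in the ascending step changes, after which $31\vstar$ follows by symmetry.

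The main obstacle I anticipate is bookkeeping rather than insight: one must check that the chosen triples occupy legal positions for every $m$, that each renumbering produces the claimed permutation, and that the single ambiguous step can indeed be driven to the identity under each star placement. It is worth noting that the second link requires position $4$ to exist in $C_m$, i.e.\ $m \ge 4$; this is precisely where the hypothesis ``length $4$ or greater'' enters, and it explains why the chain cannot be pushed down to link $\id{3}$ with $\id{4}$.
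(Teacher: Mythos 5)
Your proposal is correct --- I verified all three links for both of your representatives --- but it is organized genuinely differently from the paper's proof. The paper also argues by explicit replacement chains, but its main chain (two descents followed by three ascents) connects $\id{5}$ to $\id{6}$ and needs five elements, so it only yields $\id{n} \equiv \id{n+1}$ for $n \ge 5$; the permutation $\id{4}$ must then be patched in separately via $\id{4} \equiv 321 \equiv \id{7}$, and that patch is the one place where the paper splits into cases on the star's position ($1234 \equiv 431 \equiv 321$ under $\vstar31$ and $3\vstar1$, versus $1234 \equiv 421 \equiv 321$ under $31\vstar$). Your descent--descent--ascent chain $\id{m+1} \equiv C_m \equiv D_{m-1} \equiv \id{m}$ works uniformly for all $m \ge 4$, eliminating the special case and explaining the threshold $4$ structurally (position $4$ must exist in $C_m$); in exchange, you invoke the reverse-complement symmetry to dispose of $31\vstar$, which the paper's main chain does not need. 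Both are sound; yours is shorter and more uniform, the paper's is self-contained for this lemma.

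One caution about a parenthetical in your write-up: your stated reason that the two descending steps are insensitive to the star's placement --- ``the deleted element is the middle value regardless of where the $\vstar$ sits'' --- is not by itself sufficient, because the two surviving values land in \emph{different positions} for different placements of $\vstar$. This happens to be harmless in your first link only because the chosen positions $2,3,4$ of $\id{m+1}$ are contiguous, so all placements collapse to the same string after the star is dropped. In your second link the positions $1,2,4$ are not contiguous, and the claim is genuinely false for $31\vstar$: there the replacement on the values $1,3,4$ of $C_m$ produces $3\,1\,2\,4\,5\cdots(m-1)$, not $D_{m-1}$. Since you route $31\vstar$ through the symmetry and only need $\vstar31$ and $3\vstar1$ (for which the computation does give $D_{m-1}$), your proof is unharmed, but the verification must track where the survivors land, not merely which value is deleted.
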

\begin{proof} First, we show that $\id{5} \equiv \id{6}$:
\begin{align*}
12345 & \equiv 2134 \tag*{[$123 \to 21$]}\\
& \equiv 231 \tag*{[$134 \to 31$]}\\
& \equiv 3124 \tag*{[$31 \to 124$]} \\
& \equiv 12435 \tag*{[$31 \to 124$]}\\
& \equiv 123456. \tag*{[$43 \to 345$]}
\end{align*}
Thus, in general for $n \ge 6$ we can apply the above replacements to the first five elements of $\id{n}$ to obtain $\id{n} \equiv \id{n+1}$, so that $\id{5} \equiv \id{6} \equiv \id{7} \equiv \dots$, which was to be shown.

However, this misses $\id{4}$. We now show $\id{4} \equiv \id{7}$, which will complete the proof. Under $123 \vto \vstar31$ (and similarly under $123 \vto 3\vstar1$) we have $1234 \equiv 431 \equiv 321$. Under $123 \vto 31\vstar$, we have $1234 \equiv 421 \equiv 321$. In all three cases, we have $1234 \equiv 321$, from which we continue,
\begin{align*}
1234 & \equiv 321 \\
& \equiv 2341 \tag*{[$32 \to 234$]}\\
& \equiv 23145  \tag*{[$41 \to 145$]}\\
& \equiv 213456  \tag*{[$31 \to 134$]}\\
& \equiv 1234567,  \tag*{[$21 \to 123$]}\\
\end{align*}
as desired.
 \end{proof}

Now we prove the same thing for the other replacements:

\begin{lemma} 
\label{lem:id32and21} 
Under $123\vto 32$ and $123 \vto 21$, all identity permutations of length 4 or greater are equivalent to one another. 
\end{lemma}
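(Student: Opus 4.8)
The plan is to peel off $123\vto21$ immediately using the reverse--complement symmetry noted at the end of Section~\ref{sec:definitions}. Each identity permutation is its own reverse complement, and the three replacements comprising $123\vto21$ are exactly the reverse complements of the three comprising $123\vto32$ (for instance $32\vstar$ pairs with $\vstar21$, and $3\vstar2$ with $2\vstar1$). Hence the desired equivalences among the $\id n$ transfer verbatim, and it suffices to prove the statement for the three variants $\vstar32$, $3\vstar2$, and $32\vstar$ of $123\vto32$.

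For $123\vto32$ I would follow the template of Lemma~\ref{lem:id31}. First establish an increment $\id n\equiv\id{n+1}$ for all $n$ at or above a small threshold by exhibiting an explicit sequence of replacements confined to a bounded prefix of $\id n$. The mechanism mirrors the $123\vto31$ case: apply a forward step $123\to32$ to a triple of the increasing prefix to manufacture a descent (shortening by one), then alternate a few $32\to123$ and $123\to32$ steps to repackage that descent into a strictly longer increasing run. Because everything happens inside a fixed window while the tail stays put, the same chain run on $\id n$ gives $\id n\equiv\id{n+1}$, so all identities from the threshold onward are mutually equivalent. It then remains to attach the boundary case $\id4$ to this common class by a dedicated chain, in the spirit of the $\id4\equiv\id7$ computation: collapse $\id4$ to a short, heavily descending permutation and then grow that permutation up into an identity already known to lie in the big class.

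The step I expect to be the main obstacle is precisely this boundary bookkeeping, carried out simultaneously over the three star positions. The large-$n$ increment is forgiving, since the long surrounding increasing run leaves ample room to place the $\vstar$ wherever each of $\vstar32$, $3\vstar2$, $32\vstar$ demands. For $\id4$ there is almost no slack, so the three variants may require slightly different opening moves and may collapse $\id4$ to different short permutations---exactly the phenomenon seen in Lemma~\ref{lem:id31}, where $31\vstar$ had to be separated from $\vstar31$ and $3\vstar1$. Checking that every variant admits a valid chain reaching a permutation of length at least $5$ in the common class, with no length or reachability obstruction that would strand $\id4$ in a class by itself, is the delicate part; the symmetry reduction and the large-$n$ increment are routine by comparison.
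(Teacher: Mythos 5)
Your symmetry reduction is exactly right and matches the paper: since every identity is its own reverse complement and the three variants of $123\vto21$ are the reverse complements of the three variants of $123\vto32$, it suffices to treat $\vstar32$, $3\vstar2$, and $32\vstar$. The problem is that everything after this point is a plan rather than a proof. For a lemma of this kind the entire mathematical content \emph{is} the explicit chain of replacements, and you never exhibit one: neither the increment $\id{n}\equiv\id{n+1}$ nor the attachment of $\id{4}$ is actually carried out, and you yourself flag the latter as unresolved (``the delicate part''). A description of the mechanism (``manufacture a descent, then repackage it into a longer increasing run'') cannot be certified without the actual moves, because validity of each move depends on the positional constraints the star pattern imposes, which is exactly where such chains can fail.

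For comparison, the paper's proof is a single three-step chain that starts at length $4$, so no boundary case ever arises:
\begin{align*}
1234 &\equiv 132 \tag*{[$234 \to 43$]}\\
&\equiv 2134 \tag*{[$32 \to 134$]}\\
&\equiv 12345, \tag*{[$21 \to 123$]}
\end{align*}
and this chain is valid verbatim for all of $\vstar32$, $3\vstar2$, and $32\vstar$, since at each step the inserted or deleted star can be placed wherever the particular variant requires. Applying it to the first four elements of $\id{n}$ gives $\id{n}\equiv\id{n+1}$ for every $n\ge4$ at once. So the structure you anticipated---a threshold at length $5$, a separate dedicated chain for $\id{4}$, and possible divergence among the three star positions as happened in Lemma~\ref{lem:id31}---does not materialize here; but more importantly, your proposal defers precisely the step that constitutes the proof, so as it stands the statement remains unproven.
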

\begin{proof} We show this for $123 \vto 32$ and the result for $123 \vto 21$ will follow. 

First, we have that $\id{4} \equiv \id{5}$:
\begin{align*}
1234 & \equiv 132 \tag*{[$234 \to 43$]}\\
& \equiv 2134 \tag*{[$32 \to 134$]} \\
& \equiv 12345. \tag*{[$21 \to 123$]}
\end{align*}
For $n \ge 5$ we can apply the above replacements to the first four elements of $\id{n}$ to obtain $\id{n} \equiv \id{n+1}$, so that $\id{4} \equiv \id{5} \equiv \id{6} \equiv \dots$.
 \end{proof}

Combining the above lemmas, we can explicitly find all the equivalence classes:

\begin{theorem}
\label{thm:betaDecreasingClasses}
If $\beta$ is decreasing, there are only five equivalence classes under $123 \vto \beta$. They are $\{ \emptyset \}, \{ 1 \}$, $\{ 12 \}$, $\{ 123, 21 \}$, and a fifth class containing all other permutations.
\end{theorem}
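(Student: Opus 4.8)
The plan is to leverage the unraveling property together with the two identity-collapse lemmas to reduce everything to the behavior of short identity permutations. By Lemma~\ref{lem:unraveling}, every permutation is equivalent to some $\id{m}$, and by Lemmas~\ref{lem:id31} and~\ref{lem:id32and21} all identities with $m \ge 4$ lie in a single class. Consequently the only classes that can occur are those of $\id{0}$, $\id{1}$, $\id{2}$, $\id{3}$ and the common class of $\id{m}$ for $m \ge 4$; since every class contains an identity, the theorem reduces to pinning down the classes of the four short identities and checking that these are genuinely distinct from one another and from the long-identity class. Routing an arbitrary $\pi$ to its guaranteed equivalent identity and reading off which group $m$ falls into will then yield exactly the five advertised classes.

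First I would dispatch $\emptyset$, $1$, and $12$ by showing each is isolated. The key observation is that any single replacement, in either direction, requires the ambient string to contain either a copy of $123$ (three integers in increasing order) or a copy of $\beta$ (two integers in \emph{decreasing} order, since $\beta$ is decreasing, together with a $\vstar$). Renumbering preserves relative order and inserting $\vstar$'s creates no new integer comparisons, so no such copy can be manufactured where it does not already exist. Thus $\emptyset$ and $1$ have too few integers for either pattern, while $12$ has only two integers and they are increasing, ruling out both a copy of $123$ and a copy of the decreasing $\beta$. Hence each of $\{\emptyset\}$, $\{1\}$, $\{12\}$ is a singleton class.

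Next I would establish that the class of $\id{3}$ is exactly $\{123, 21\}$. One application of $123 \to \beta$ to $123$ deletes the single integer of $123$ whose label is absent from $\beta$ and leaves the two surviving integers in decreasing order, so $\id{3} \equiv 21$; a direct check shows this is the unique outcome regardless of the position of $\vstar$ in $\beta$. To see the pair is closed, I would enumerate the admissible moves from each element: from $123$ there is no descent, hence no copy of $\beta$, so only $123 \to \beta$ applies and it returns $21$; from $21$ there are only two integers, hence no copy of $123$, so only $\beta \to 123$ applies and it returns $123$. Since $\{123, 21\}$ is closed under the generating moves, it is precisely the equivalence class of $\id{3}$. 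In particular $\id{4} \notin \{123, 21\}$ forces $\id{3} \not\equiv \id{4}$, separating this short class from the long-identity class.

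Finally I would assemble the pieces. Given any $\pi$, Lemma~\ref{lem:unraveling} provides $\pi \equiv \id{m}$; if $m \le 3$ the isolation and closedness results identify the class uniquely as $\{\emptyset\}$, $\{1\}$, $\{12\}$, or $\{123, 21\}$, and otherwise $\pi$ joins the single class of all $\id{m}$ with $m \ge 4$, which is disjoint from the first four. This accounts for every permutation and yields exactly the five listed classes. I expect the closedness verification for $\{123, 21\}$ to be the main obstacle: it is where one must argue carefully that the replacement definition, with its freedom to renumber, insert $\vstar$'s, and choose the image string, truly admits no escape move and produces no result other than the claimed one.
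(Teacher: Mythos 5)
Your proposal is correct and follows essentially the same route as the paper's proof: closedness of the four small classes, then Lemma~\ref{lem:unraveling} to route every remaining permutation to an identity of length at least $4$, then Lemmas~\ref{lem:id31} and~\ref{lem:id32and21} to merge those identities into one class. The only difference is that you spell out the closedness and isolation checks that the paper dismisses as ``easily verified,'' which is a fair elaboration rather than a new approach.
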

\begin{proof}
It can easily be verified that each permutation in the first four classes is equivalent to every other permutation in that class, and that applying $123 \vto \beta$ permutation in the first four classes produces a permutation also already in that class. Thus, the first four listed classes contain no other permutations. Also, by Lemma \ref{lem:unraveling} every permutation not in those four classes must be equivalent to an identity of length at least 4. Then by Lemmas \ref{lem:id31} and \ref{lem:id32and21}, all identities of length at least 4 are equivalent, so all remaining permutations are equivalent to one another, forming the fifth class.
\end{proof}

\section{Drop Only: $123 \vto \vstar 23$, $123 \vto 1\vstar 3$, and $123 \vto 12\vstar$}
\label{sec:dropOnly}

The replacements $123 \vto \vstar 23$, $123 \vto 1\vstar 3$, and $123 \vto 12\vstar $  simply drop or add an element in a 123 pattern. In the remainder of this section we will proceed simultaneously with $12\vstar$ and $1\vstar3$ by using $\gamma$ to denote an arbitrary selection from the two, and later use the reverse complement symmetry to state the result of equivalence classes for $123 \vto \vstar23$.

We begin by defining a function that will take any given permutation to what we will show to be its primitive permutation. 

\begin{definition}
\label{def:pFunction}
We define a function $p_\gamma(\pi)$ for a given permutation $\pi$ and replacement $123\vto \gamma$ as follows:
\begin{enumerate}
 \item Begin with the string $\pi$.
 \item If the current string avoids 123, skip to step 4. Otherwise, find the leftmost copy of 123 in the current string first by comparing the smallest elements, then the middle elements (if necessary), and finally the largest elements (if necessary). Apply $123 \to \gamma$ to this copy of 123. 
 \item Repeat step 2 on the resulting string.
 \item Define $p_\gamma(\pi)$ to be the permutation order-isomorphic to the current string.
 \end{enumerate}
\end{definition}

For example, if $\pi = 152364$ and $\gamma = 12\vstar$, the results of the iterations of step 2 are 15234 (using 156), 1524 (using 123), and 152 (using 124), so that $p_\gamma(\pi) = 132$.

The facts below follow immediately from the definition:
\begin{itemize}
\item For every permutation $\pi$, $|p_\gamma(\pi)| \le |\pi|$ with $|p_\gamma(\pi)| = |\pi|$ only if $p_\gamma(\pi)=\pi$.
\item For every permutation $\pi$, $p_\gamma(\pi)$ avoids 123.
\item If a permutation $\pi$ avoids $123$, then $p_\gamma(\pi) = \pi$.
\item For every permutation $\pi$, $p_\gamma(\pi) \equiv \pi$.
\end{itemize}

First we show that $p_\gamma$ is preserved under one direction of the replacement:

\begin{lemma}
\label{lem:singleStep}
If $\sigma$ is the result of $123 \to \gamma$ applied to $\pi$, then $p_\gamma(\pi) = p_\gamma(\sigma)$.
\end{lemma}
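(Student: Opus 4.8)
The plan is to prove this by induction on $|\pi|$, reducing the statement to a local confluence (``diamond'') property of the single deletions that $p_\gamma$ performs. First I would record exactly what one step $123 \to \gamma$ does: for $\gamma = 12\vstar$ it deletes the top (largest, rightmost) element of some copy of $123$, and for $\gamma = 1\vstar3$ it deletes the middle element of some copy; in either case the step simply removes one element that plays a prescribed role in a $123$-pattern, followed by a relabeling. I would also note that, directly from Definition~\ref{def:pFunction}, if $\pi$ contains $123$ and $\pi'$ is the result of applying $123 \to \gamma$ to the particular leftmost copy selected by $p_\gamma$, then $p_\gamma(\pi) = p_\gamma(\pi')$. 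The case where $\pi$ avoids $123$ is vacuous, since then no copy exists to form $\sigma$.

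The heart of the argument is the following diamond claim: if two distinct elements $x$ and $y$ are each deletable in $\pi$ (each the top, resp.\ middle, of some copy of $123$), then after deleting $x$ the element $y$ is still deletable. I would prove this by examining a copy witnessing that $y$ is deletable; if $x$ is not one of its two supporting elements the copy survives, and otherwise I would use the fact that $x$ is \emph{itself} the top (resp.\ middle) of a copy to extract a spare supporting element lying on the correct side of $y$ in both position and value, thereby rebuilding a witnessing copy for $y$ that avoids $x$. Concretely, for $12\vstar$, if $x$ supports $y$'s copy then $x$ lies before $y$ with smaller value, and the ascending pair below $x$ (guaranteed because $x$ is a top) together with $y$ forms a new copy topped by $y$; the middle case for $1\vstar3$ is analogous, with the needed small-before or large-after witness supplied by $x$'s own copy.

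With the diamond in hand the induction closes cleanly. Let $e_C$ be the element deleted in forming $\sigma$ and let $e_L$ be the element deleted by $p_\gamma$'s first (leftmost) step, giving $\pi'$ with $p_\gamma(\pi) = p_\gamma(\pi')$. If $e_C = e_L$ then $\sigma = \pi'$ and we are done. Otherwise the diamond (applied once in each direction) shows that $e_L$ remains deletable in $\sigma$ and $e_C$ remains deletable in $\pi'$, so the permutation $\tau$ obtained from $\pi$ by deleting both elements is a single $123 \to \gamma$ reduct of each of $\sigma$ and $\pi'$. Since $|\sigma| < |\pi|$ and $|\pi'| < |\pi|$, the inductive hypothesis applied to $\sigma \to \tau$ and to $\pi' \to \tau$ yields $p_\gamma(\sigma) = p_\gamma(\tau) = p_\gamma(\pi') = p_\gamma(\pi)$, as desired.

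I expect the main obstacle to be the diamond claim, and specifically the overlapping case where the deleted element $x$ is one of the two elements supporting $y$'s pattern: here one must argue that a valid witnessing copy for $y$ still exists, which is exactly where the hypothesis that $x$ is itself deletable (and hence carries its own supporting pair) becomes essential. I would also take care that deletions at distinct positions commute up to order-isomorphism, so that $\tau$ is well defined and $p_\gamma$, which is defined only up to relabeling, is unaffected by the order in which the two elements are removed.
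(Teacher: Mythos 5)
Your proposal is correct, but it takes a genuinely different route from the paper's. The paper proves Lemma~\ref{lem:singleStep} by directly simulating the two computations of $p_\gamma(\pi)$ and $p_\gamma(\sigma)$ in parallel: using the leftmost-copy selection rule of Definition~\ref{def:pFunction}, it tracks the positions on which each iteration of step 2 acts, argues that the iterations coincide while the chosen copies lie among the first $k-1$ elements, handles the iteration(s) in which the deleted element $\pi_k$ participates, and concludes that from that point on the intermediate strings are order-isomorphic, forcing equal outputs; this requires separate and somewhat delicate case analyses for $\gamma = 12\vstar$ (where $\pi_k$ enters only as the third element of a chosen copy) and $\gamma = 1\vstar3$ (where the copies chosen for $\pi$ and $\sigma$ may differ in their third element yet drop middle elements at the same positions). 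You instead reduce everything to a local-confluence (diamond) claim --- deletability of $y$ survives deletion of a distinct deletable $x$, because $x$'s own witnessing copy supplies a substitute witness on the correct side of $y$ in both position and value --- and close by induction on $|\pi|$ via the common reduct $\tau$, in the style of Newman's lemma. Your diamond claim is true in both cases (for $12\vstar$, the ascending pair $c<d<x$ before $x$ gives $c<d<y$ with increasing positions, rebuilding a copy topped by $y$; for $1\vstar3$, the small-before or large-after witness of $x$ transfers to $y$), and the induction is well-founded since the vacuous case covers permutations avoiding $123$. Comparing what each buys: the paper's simulation avoids induction and stays close to the algorithmic definition of $p_\gamma$, but its correctness rests on carefully worded claims about which copy is selected at every iteration; your argument is more modular, treats the two choices of $\gamma$ nearly uniformly, and in fact establishes local confluence of the deletion relation, which together with termination by length gives global confluence --- so it would also deliver Theorem~\ref{thm:dropOnlyPrimitive} (independence of the terminal $123$-avoiding permutation from the choice of reduction sequence) with essentially no extra work, something the paper instead routes through Lemma~\ref{lem:equivalenceCondition}.
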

\begin{proof}
When written out in terms of their elements, let $\pi = \pi_1 \dots \pi_{k-1} \pi_k \pi_{k+1} \dots \pi_{n}$ and $\sigma = \sigma_1 \dots \sigma_{k-1} \sigma_{k+1} \dots \sigma_n$, so that $n = |\pi| = |\sigma| + 1$ and if $\pi_k$ is dropped from $\pi$ the remaining elements are order-isomorphic to $\sigma$. We now proceed with the proof separately for $\gamma = 12\vstar$ and $\gamma = 1\vstar3$. 

First consider $\gamma = 12\vstar$. We will simultaneously compare the processes of calculating $p_\gamma(\pi)$ and $p_\gamma(\sigma)$. Each iteration of step 2 in Definition \ref{def:pFunction} will be performed on copies of 123 at the same positions for computing $p_\gamma(\pi)$ and $p_\gamma(\sigma)$ when the entire copy of 123 is in the first $k-1$ elements. The first iteration of step 2 in $p_\gamma(\pi)$ for which this is not true must be performed on a copy of 123 in which $\pi_k$ is the third element, because at least one such copy exists (the one on which $123 \vto 12\vstar$ was applied to form $\sigma$). All iterations after this must again be performed on the same positions for $p_\gamma(\pi)$ and $p_\gamma(\sigma)$. Furthermore, the resulting strings of each iteration will be order-isomorphic for the two processes, so that the end results will be equal.

Now suppose $\gamma = 1\vstar3$. Again, each iteration of step 2 performed completely in the first $k-1$ elements will be on the same positions for $p_\gamma(\pi)$ and $p_\gamma(\sigma)$. However, on the iterations for $p_\gamma(\pi)$ in which $\pi_k$ is the third element of a 123 pattern, a copy of 123 will be chosen for $p_\gamma(\sigma)$ in which the first two elements are at the same positions as those for $p_\gamma(\pi)$, but the third element will be to the right of $\sigma_{k-1}$. (We know at least one such third element exists: the third element of the 123 copy on which $123 \vto 1\vstar3$ was applied to form $\sigma$.) Even though the copy of 123 chosen for $p_\gamma(\pi)$ and $p_\gamma(\sigma)$ are different, the middle elements that are dropped will be in the same positions. Finally, on the iteration for $p_\gamma(\pi)$ in which $\pi_k$ is the middle element (this iteration must take place because an appropriate 123 copy must exist), $\pi_k$ will be dropped. The resulting strings at this point for $p_\gamma(\pi)$ and $p_\gamma(\sigma)$ will be order-isomorphic, so the final permutations $p_\gamma(\pi)$ and $p_\gamma(\sigma)$ will be equal.
\end{proof}

Now, we can put a condition on equivalency involving $p_\gamma$:

\begin{lemma}
\label{lem:equivalenceCondition}
Under $123 \vto \gamma$, we have $\pi \equiv \sigma$ if and only if $p_\gamma(\pi) = p_\gamma(\sigma)$.
\end{lemma}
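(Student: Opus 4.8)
The plan is to prove the two directions of the equivalence separately, leaning heavily on Lemma~\ref{lem:singleStep} for the forward direction and on the fact that $p_\gamma(\pi) \equiv \pi$ (already noted among the immediate facts) for the reverse direction.

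For the forward direction, I would assume $\pi \equiv \sigma$ and show $p_\gamma(\pi) = p_\gamma(\sigma)$. By Definition~\ref{def:equivalent}, equivalence means there is a finite chain $\pi = \eta_0, \eta_1, \dots, \eta_\ell = \sigma$ in which each $\eta_{i+1}$ is obtained from $\eta_i$ by a single replacement in one direction or the other (either $123 \to \gamma$ or $\gamma \to 123$). The strategy is to show $p_\gamma(\eta_i) = p_\gamma(\eta_{i+1})$ for every consecutive pair, after which the claim follows by transitivity of equality along the chain. For a step of the form $123 \to \gamma$, this is exactly Lemma~\ref{lem:singleStep}. For a step of the form $\gamma \to 123$, note that this is just the inverse operation: if $\eta_{i+1}$ results from applying $\gamma \to 123$ to $\eta_i$, then equivalently $\eta_i$ results from applying $123 \to \gamma$ to $\eta_{i+1}$, so Lemma~\ref{lem:singleStep} (with the roles swapped) again gives $p_\gamma(\eta_i) = p_\gamma(\eta_{i+1})$. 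Thus every single replacement, in either direction, preserves the value of $p_\gamma$, and chaining these equalities yields $p_\gamma(\pi) = p_\gamma(\sigma)$.

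For the reverse direction, I would assume $p_\gamma(\pi) = p_\gamma(\sigma)$ and produce the equivalence $\pi \equiv \sigma$. This is the easier half: among the facts listed immediately after Definition~\ref{def:pFunction}, we have $p_\gamma(\pi) \equiv \pi$ and $p_\gamma(\sigma) \equiv \sigma$. Since equivalence is itself an equivalence relation (being reachability through bi-directional replacements, it is symmetric and transitive), we combine $\pi \equiv p_\gamma(\pi) = p_\gamma(\sigma) \equiv \sigma$ to conclude $\pi \equiv \sigma$.

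The main subtlety to handle carefully is the backward step $\gamma \to 123$ in the forward direction. I must make sure that ``$\eta_{i+1}$ is a result of $\gamma \to 123$ on $\eta_i$'' really is the same relation as ``$\eta_i$ is a result of $123 \to \gamma$ on $\eta_{i+1}$,'' so that Lemma~\ref{lem:singleStep} applies verbatim with the two permutations interchanged. This amounts to observing that the replacement procedure of Definition~\ref{def:replacement} is symmetric under swapping $\alpha$ and $\beta$: each step (renumbering, choosing a copy and substituting, dropping stars and renumbering) is reversible in the sense that the same pair of permutations witnesses the replacement in both directions. Once this reversibility is spelled out, the rest is a routine induction on the length $\ell$ of the replacement chain, and no further obstacle arises.
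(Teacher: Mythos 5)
Your proposal is correct and follows essentially the same route as the paper's proof: the easy direction via $\pi \equiv p_\gamma(\pi) = p_\gamma(\sigma) \equiv \sigma$, and the hard direction by decomposing the equivalence into a chain of single replacements and applying Lemma~\ref{lem:singleStep} to each step, noting that a $\gamma \to 123$ step is just a $123 \to \gamma$ step with the two permutations interchanged. The reversibility observation you flag as a subtlety is asserted without further comment in the paper, so your treatment is, if anything, slightly more careful.
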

\begin{proof}
First, we prove the if direction. Suppose $p_\gamma(\pi) = p_\gamma(\sigma)$. Then we have $\pi \equiv p_\gamma(\pi) = p_\gamma(\sigma) \equiv \sigma$, so that $\pi \equiv \sigma$ as desired.

For the only if direction assume $\pi \equiv \sigma$. By definition of equivalence, there must exist some sequence of permutations $\pi^{(0)}=\pi, \pi^{(1)}, \pi^{(2)}, \dots, \pi^{(k)}=\sigma$ where $\pi^{(i+1)}$ is the result of performing a $123\to \gamma$ or a $\gamma \to 123$ replacement on $\pi^{(i)}$. We claim that $p_\gamma\left(\pi^{(i)}\right) = p_\gamma\left(\pi^{(i+1)}\right)$ for all $0 \le i \le k-1$. 

Suppose $\pi^{(i+1)}$ is the result of a $123 \to \gamma$ replacement on $\pi^{(i)}$. Then by Lemma \ref{lem:singleStep}, $p_\gamma\left(\pi^{(i)}\right) = p_\gamma\left(\pi^{(i+1)}\right)$. On the other hand, if $\pi^{(i+1)}$ is the result of a $\gamma \to 123$ replacement, then $\pi^{(i)}$ is the result of a $123 \to \gamma$ on $\pi^{(i+1)}$. Thus, by Lemma \ref{lem:singleStep} again we have $p_\gamma\left(\pi^{(i)}\right) = p_\gamma\left(\pi^{(i+1)}\right)$.

Therefore, $p_\gamma(\pi) = p_\gamma\left(\pi^{(0)}\right) = p_\gamma\left(\pi^{(1)}\right) = \dots = p_\gamma\left(\pi^{(k)}\right) = p_\gamma(\sigma)$, as desired. \end{proof}

Now we have enough to show that $p_\gamma(\pi)$ is the primitive permutation of $\pi$:

\begin{lemma}
\label{lem:dropOnlyPrimitive}
Under $123 \vto \gamma$, $p_\gamma(\pi)$ is the primitive permutation of $\pi$.
\end{lemma}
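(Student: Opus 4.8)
The plan is to verify directly the two requirements in Definition~\ref{def:primitive}: that $p_\gamma(\pi)$ has the shortest length among all permutations equivalent to $\pi$, and that it is the unique permutation of that length in its class. All the ingredients are already in hand---the bulleted facts following Definition~\ref{def:pFunction} together with Lemma~\ref{lem:equivalenceCondition}---so no fresh construction or computation should be needed; the work is just to assemble them correctly.

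First I would observe that $p_\gamma(\pi)$ genuinely lies in the equivalence class of $\pi$, since the last bulleted fact gives $p_\gamma(\pi) \equiv \pi$. Next, I would take an arbitrary $\sigma$ with $\sigma \equiv \pi$ and invoke Lemma~\ref{lem:equivalenceCondition} to conclude $p_\gamma(\sigma) = p_\gamma(\pi)$. The first bulleted fact gives $|p_\gamma(\sigma)| \le |\sigma|$, and chaining these yields $|p_\gamma(\pi)| = |p_\gamma(\sigma)| \le |\sigma|$. Because $\sigma$ was an arbitrary member of the class, this establishes that $p_\gamma(\pi)$ attains the minimum length in the class.

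For uniqueness, I would suppose $\sigma \equiv \pi$ with $|\sigma| = |p_\gamma(\pi)|$ and show $\sigma = p_\gamma(\pi)$. As above, $p_\gamma(\sigma) = p_\gamma(\pi)$, so $|p_\gamma(\sigma)| = |p_\gamma(\pi)| = |\sigma|$. The first bulleted fact is sharp: it states that $|p_\gamma(\sigma)| = |\sigma|$ only when $p_\gamma(\sigma) = \sigma$. Hence $\sigma = p_\gamma(\sigma) = p_\gamma(\pi)$, so no other permutation of minimum length can occur in the class.

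The argument is brief precisely because Lemma~\ref{lem:equivalenceCondition} already carries the weight of identifying the class with a single value of $p_\gamma$. The only point that requires any care is to use the length comparison in its sharp form---pairing the inequality $|p_\gamma(\sigma)| \le |\sigma|$ with its equality case---so as to obtain uniqueness and not merely minimality. Beyond that bookkeeping, I anticipate no real obstacle.
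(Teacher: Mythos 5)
Your proof is correct and follows essentially the same route as the paper's: both rest on Lemma~\ref{lem:equivalenceCondition} (any equivalent $\sigma$ satisfies $p_\gamma(\sigma)=p_\gamma(\pi)$) combined with the sharp form of the length inequality $|p_\gamma(\sigma)|\le|\sigma|$, with equality forcing $p_\gamma(\sigma)=\sigma$. The only difference is presentational---the paper argues by contradiction over the two cases $|\sigma|<|p_\gamma(\pi)|$ and $|\sigma|=|p_\gamma(\pi)|$, while you give the same case analysis directly as minimality plus uniqueness.
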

\begin{proof}
By Lemma \ref{lem:equivalenceCondition}, we have $\pi \equiv p_\gamma(\pi)$, so it remains to show that there does not exist a permutation $\sigma$ such that $\sigma \equiv \pi$ with $\sigma$ not order-isomorphic to $p_\gamma(\pi)$ and $|\sigma| \le |p_\gamma(\pi)|$.

For sake of contradiction, assume that some $\sigma \equiv \pi$ exists that is not order-isomorphic to and no longer than $p_\gamma(\pi)$. If $|\sigma|<|p_\gamma(\pi)|$, then $|p_\gamma(\sigma)| \le |\sigma| < |p_\gamma(\pi)|$, so $p_\gamma(\pi) \not= p_\gamma(\sigma) \implies \pi \not \equiv \sigma$, contradiction. Otherwise, $|\sigma|=|p_\gamma(\pi)|$, and we must have $p_\gamma(\pi) = p_\gamma(\sigma)$, so $|\sigma| = |p_\gamma(\sigma)|$. Thus $\sigma$ is order-isomorphic to $p_\gamma(\sigma) = p_\gamma(\pi)$, contradiction.
\end{proof}

We restate the definition of the primitive permutation without using of $p_\gamma(\pi)$ so that we can include $123 \vto \vstar23$.

\begin{theorem}
\label{thm:dropOnlyPrimitive}
For $\beta=12\vstar, 1\vstar3, \vstar23$, the primitive permutation of $\pi$ under $123 \vto \beta$ is the result of repeatedly applying to $\pi$ the replacement $123 \to \beta$ on any choice of a 123 pattern until none exist.
\end{theorem}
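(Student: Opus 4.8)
The plan is to reduce the theorem to the machinery already built for $p_\gamma$. I would first handle the two cases $\beta = 12\vstar$ and $\beta = 1\vstar3$ together (writing $\gamma$ for either), and then obtain $\beta = \vstar23$ from $\beta = 12\vstar$ by the reverse-complement symmetry, since $\vstar23$ is the reverse complement of $12\vstar$ (and $1\vstar3$ is its own reverse complement). For $\gamma\in\{12\vstar,1\vstar3\}$ there are exactly two things to verify: that the described process halts, and that the permutation it halts at is independent of which copy of 123 is chosen at each step. Termination is immediate, because a single $123 \to \gamma$ replacement drops exactly one element, so each iteration strictly decreases the length and the process must stop after finitely many steps at a permutation that avoids 123.

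For independence of the choices (a confluence statement), I would take an arbitrary run of the process producing $\pi = \sigma_0, \sigma_1, \dots, \sigma_m$, where each $\sigma_{i+1}$ is the result of one $123 \to \gamma$ replacement on some copy of 123 in $\sigma_i$ and $\sigma_m$ avoids 123. Since each step is a single replacement, $\sigma_{i+1} \equiv \sigma_i$, so $\sigma_m \equiv \pi$. Lemma~\ref{lem:equivalenceCondition} then gives $p_\gamma(\sigma_m) = p_\gamma(\pi)$, while the facts recorded after Definition~\ref{def:pFunction} give $p_\gamma(\sigma_m) = \sigma_m$ because $\sigma_m$ avoids 123. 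Hence $\sigma_m = p_\gamma(\pi)$ no matter which choices were made, and by Lemma~\ref{lem:dropOnlyPrimitive} this common value is precisely the primitive permutation of $\pi$.

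For $\beta = \vstar23$ I would invoke the reverse-complement symmetry recorded in Section~\ref{sec:definitions}. Because $\vstar23$ is the reverse complement of $12\vstar$ and $123$ is its own reverse complement, $\tau$ is a result of $123 \to \vstar23$ on $\pi$ if and only if $\tau^{rc}$ is a result of $123 \to 12\vstar$ on $\pi^{rc}$, and a permutation avoids 123 exactly when its reverse complement does. Thus a terminal run of the process under $123 \to \vstar23$ starting from $\pi$ corresponds, after applying reverse complement throughout, step for step to a terminal run under $123 \to 12\vstar$ starting from $\pi^{rc}$. By the case already settled the latter ends at the primitive permutation of $\pi^{rc}$, independent of choices; taking reverse complements back, the former ends at a single well-defined permutation, and since reverse complement is a length-preserving involution carrying equivalence classes to equivalence classes, that permutation is the unique shortest permutation equivalent to $\pi$, i.e.\ its primitive permutation under $123 \vto \vstar23$.

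I expect the only delicate point to be the $\vstar23$ case: one must check that the reverse-complement symmetry intertwines \emph{individual} replacement steps rather than merely overall equivalence, and that both ``avoids 123'' and ``is the unique shortest equivalent permutation'' are preserved under reverse complement. The confluence argument for $\gamma\in\{12\vstar,1\vstar3\}$ is the conceptual heart, but it falls out almost directly from Lemma~\ref{lem:equivalenceCondition} together with the observation that every 123-avoiding permutation is fixed by $p_\gamma$.
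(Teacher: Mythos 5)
Your proposal is correct and follows essentially the same route as the paper: both arguments rest on Lemma~\ref{lem:equivalenceCondition} together with the fact that $p_\gamma$ fixes 123-avoiding permutations (the paper phrases this as a proof by contradiction, yours as a direct confluence argument, but they are the same reasoning), and both dispatch $\beta = \vstar23$ via the reverse-complement symmetry. Your explicit remarks on termination and on checking that the symmetry intertwines individual replacement steps are details the paper leaves implicit, but they do not change the substance of the argument.
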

\begin{proof}
We first show this for $\beta = 12\vstar$ and $\beta = 1\vstar3$. Suppose the result of applying to $\pi$ the replacement $123 \to \beta$ repeatedly to an arbitrary set of choices of copies of 123 is $\sigma$. If $\sigma = p_\beta(\pi)$, the result is true. Otherwise, because $\sigma$ avoids 123 we have $p_\beta(\sigma) = \sigma \not= p_\beta(\pi)$, so by Lemma \ref{lem:equivalenceCondition} $\pi \not \equiv \sigma$, contradiction.

For $\beta = \vstar23$, we use the reverse complement symmetry: the theorem statement is true for $\beta = 12\vstar$, and the reverse complement of the statement is the statement itself, so it is true for $\beta = \vstar23$.
\end{proof}

As a result, the primitive permutations characterize the equivalence classes:

\begin{theorem}
\label{thm:dropOnlyClasses}
For $\beta=12\vstar, 1\vstar3, \vstar23$, under $123 \vto \beta$, for each $\tau$ avoiding $123$, there exists a distinct class consisting of all $\pi$ whose primitive permutation (as defined in Theorem \ref{thm:dropOnlyPrimitive}) is $\tau$.
\end{theorem}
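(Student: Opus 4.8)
The plan is to reduce the entire statement to the already-established correspondence between equivalence and the function $p_\gamma$, and then to identify the image of that function. I would first treat $\gamma = 12\vstar$ and $\gamma = 1\vstar3$ together, and afterward recover the case $\beta = \vstar23$ from the reverse complement symmetry, exactly as was done at the end of Theorem \ref{thm:dropOnlyPrimitive}. The key observation for the main case is that Lemma \ref{lem:equivalenceCondition} says $\pi \equiv \sigma$ if and only if $p_\gamma(\pi) = p_\gamma(\sigma)$; hence the equivalence classes are precisely the nonempty fibers of the map $\pi \mapsto p_\gamma(\pi)$, and characterizing the classes amounts to describing the image of $p_\gamma$ and checking that its fibers are indexed bijectively by $123$-avoiding permutations.

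To carry this out, I would extract from the facts listed immediately after Definition \ref{def:pFunction} that every $p_\gamma(\pi)$ avoids $123$, and that $p_\gamma(\tau) = \tau$ whenever $\tau$ avoids $123$. Together these show that the image of $p_\gamma$ is exactly the set of $123$-avoiding permutations: the first fact gives one inclusion, and the second exhibits each $123$-avoiding $\tau$ as the value $p_\gamma(\tau)$. The conclusion then follows quickly. For each $123$-avoiding $\tau$, its fiber $\{\pi : p_\gamma(\pi) = \tau\}$ is nonempty since it contains $\tau$ itself, and by Lemma \ref{lem:dropOnlyPrimitive} together with Theorem \ref{thm:dropOnlyPrimitive} this fiber is precisely the set of permutations whose primitive permutation is $\tau$. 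Distinct choices of $\tau$ yield genuinely distinct classes, because if $\tau$ and $\tau'$ indexed the same class then applying $p_\gamma$ to a common member would return both, forcing $\tau = \tau'$. Finally, every permutation $\pi$ belongs to the class indexed by its primitive permutation $p_\gamma(\pi)$, so these classes exhaust all permutations.

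For $\beta = \vstar23$ I would invoke the reverse complement symmetry noted in the introduction. The reverse complement map carries equivalence under $123 \vto 12\vstar$ to equivalence under $123 \vto \vstar23$ and sends $123$-avoiding permutations to $123$-avoiding permutations, since $123$ is its own reverse complement; hence the statement transfers verbatim from the already-proved case $\gamma = 12\vstar$. I expect no genuine obstacle here, as the substantive work was completed in Lemmas \ref{lem:equivalenceCondition} and \ref{lem:dropOnlyPrimitive}; the only point requiring care is confirming that the image of $p_\gamma$ is exactly the $123$-avoiding permutations and aligning the present notion of primitive permutation (via Theorem \ref{thm:dropOnlyPrimitive}) with the concrete function $p_\gamma$.
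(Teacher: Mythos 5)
Your proposal is correct and takes essentially the same route as the paper: both are bookkeeping corollaries of Lemma \ref{lem:equivalenceCondition}, Lemma \ref{lem:dropOnlyPrimitive}, and Theorem \ref{thm:dropOnlyPrimitive}, identifying the equivalence classes with the fibers of the primitive-permutation map indexed by $123$-avoiding permutations. The only cosmetic difference is that you phrase the argument via fibers of $p_\gamma$ and redo the reverse-complement transfer for $\vstar23$, whereas the paper argues directly from the uniqueness of primitives (Definition \ref{def:primitive}), which already covers all three $\beta$ at once since Theorem \ref{thm:dropOnlyPrimitive} absorbed that symmetry.
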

\begin{proof}
Note that for each $\tau$ avoiding 123, $\tau$ itself along with all other permutations whose primitive permutation is $\tau$ will be equivalent by Definition \ref{def:primitive}, and thus are in the same class.

Suppose now there exists another permutation $\sigma$ that is in the same class as $\pi$, but has primitive permutation $\omega$ different than $\tau$. However, this is a contradiction because both $\omega$ and $\tau$ are defined to be the unique permutation of shortest length equivalent to $\pi$.
\end{proof}

Note that while the above statement of the equivalence classes is the same for all three possible replacements, the classes themselves are different. This is because the primitive permutations can be different for different $\beta$.

\section{Shift Right and Shift Left: $123 \vto \vstar12$ and $123 \vto 23\vstar$}
\label{sec:shiftRightShiftLeft}
We now deal with the replacements that shift two elements of a $123$ pattern to the left or right and drop the third. We may immediately characterize the classes with the following theorem. In the proof, we draw inspiration from the stooge sort, in a manner similar to the proof of Proposition 2.17 in \cite{kuszmaul13}.

\begin{theorem}
\label{thm:shiftRightShiftLeftClasses}
Under $123 \vto \vstar12$ (and similarly under $123 \vto 23\vstar$), each reverse identity is isolated and all other permutations are in the same class.
\end{theorem}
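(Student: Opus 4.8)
The plan is to prove the two halves separately: that every reverse identity is isolated, and that all remaining permutations form a single class. For the isolation claim I would show that neither direction of the replacement can be applied to $\rid{n}$. A forward replacement $123 \to \vstar12$ requires a copy of $123$, i.e.\ an increasing subsequence of length three, but $\rid{n}$ is strictly decreasing. A backward replacement $\vstar12 \to 123$ requires, after renumbering and inserting stars, a copy of $\vstar12$, whose two integer entries form an increasing pair; since renumbering preserves relative order and stars contribute no integers, $\rid{n}$ contains no increasing pair and hence no such copy. Thus $\rid{n}$ admits no replacement and is isolated. I would then record the key consequence: because each reverse identity sits alone in its class and every replacement changes length by exactly one (so produces a distinct permutation), no replacement applied to any permutation ever outputs a reverse identity. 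Equivalently, a permutation fails to be a reverse identity exactly when it has an ascent, and the presence of an ascent is preserved along any sequence of replacements.

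Next I would show it suffices to connect everything to $12$. A direct computation gives $\id{n} \equiv \id{n+1}$ for all $n \ge 2$ (for instance, applying $123 \to \vstar12$ to the first three entries of $\id{n+1}$ yields $\id{n}$, while a backward move expanding the ascent of $12$ gives $12 \equiv 123$), so all identities of length at least two lie in a single class together with $12$. The theorem therefore reduces to the single claim that every permutation possessing an ascent is equivalent to $12$.

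To prove that claim I would first strip away all $123$ patterns: since $123 \to \vstar12$ strictly decreases length, repeatedly applying it must terminate at a permutation $\tau \equiv \pi$ that avoids $123$, and by the consequence noted above $\tau$ still has an ascent. The problem thus reduces to its hard core: \emph{every $123$-avoiding permutation with an ascent is equivalent to $12$}. Here a forward move is unavailable, so progress must come from a backward move (which creates a $123$) followed by forward moves, a length-neutral or length-decreasing excursion reminiscent of stooge sort. I would set up a recursion on length in which a carefully chosen ascent and insertion gap inflate $\tau$ into a permutation containing $123$ and then deflate it to a strictly simpler $123$-avoiding permutation, measuring simplicity by length or by number of inversions, and iterate down to $12$.

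The main obstacle is precisely this last step. Unlike the decreasing-$\beta$ case of Section~\ref{sec:betaDecreasing}, one cannot simply unravel to the identity, and naive inflations get stuck: repeatedly expanding the leading ascent of $231$ only produces $2341, 23451, \dots$, never freeing the trailing $1$. The delicate point is to guarantee that for \emph{every} $123$-avoiding permutation with an ascent there is a choice of backward-then-forward moves that strictly reduces the chosen complexity measure while keeping an ascent present, so the recursion never stalls. The isolation result is what makes this feasible, since as long as an ascent survives a backward move is always available, and the only ascent-free permutations---the reverse identities---are exactly the ones already excluded. Verifying that a stooge-sort-style recursion realizes such a reduction in all cases is where the real work lies.
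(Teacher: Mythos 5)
Your proposal is incomplete in exactly the place where the theorem's content lies. The isolation of each $\rid{n}$, the observation that ascents are preserved, and the chain $\id{2} \equiv \id{3} \equiv \cdots$ are all correct and match the paper. But the central claim---that every non-reverse-identity permutation is equivalent to $12$---is never established. You reduce it to the statement that every $123$-avoiding permutation with an ascent is equivalent to $12$, observe (correctly, e.g.\ for $231$) that naive backward-then-forward excursions can stall, and then explicitly defer ``the real work'' of showing that some complexity measure can always be strictly decreased. That deferred step is the whole difficulty: since a $123$-avoider admits no forward move, any argument must engineer a backward move whose resulting $123$ pattern can be consumed profitably, and you give no mechanism guaranteeing this in general. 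Moreover, your preliminary reduction to $123$-avoiders arguably makes the problem harder, since it discards the permutations on which forward moves give you room to maneuver.

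The paper avoids this trap with a different induction. It proves, by induction on $n \ge 3$, that every non-reverse-identity permutation of length $n$ is equivalent to $\id{n}$ (base case $n=3$ checked directly). For the inductive step it exploits the fact that replacements act on classical (non-adjacent) patterns, so an entire replacement sequence provided by the inductive hypothesis can be carried out on the sub-pattern formed by the first $n-1$ (or last $n-1$) elements of a length-$n$ permutation while the remaining element sits untouched. Sorting the first $n-1$ elements, then the last $n-1$, then the first $n-1$ again (swapping the order of blocks if the first $n-1$ elements happen to form $\rid{n-1}$, which cannot occur for both blocks simultaneously) produces $\id{n}$---the stooge-sort idea you gestured at, but applied to overlapping blocks of length $n-1$ within a fixed length $n$, never requiring any analysis of $123$-avoiding permutations or of which backward moves are available. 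If you want to salvage your outline, this block-induction is the missing idea: it converts the existence problem you could not resolve into three applications of an inductive hypothesis about shorter patterns.
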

\begin{proof}
Note that the two replacements are reverse complements of one another, and the reverse complement version of the theorem's statement is the same as the statement, so we only work with $123 \vto \vstar 12$.

It is not possible to apply either direction of the replacement $123 \vto \vstar12$ to a reverse identity, so each reverse identity must not be equivalent to any other permutation and is thus isolated.

On the other hand, we claim that the permutations that are not reverse identities are equivalent. Note that immediately we have $12 \equiv 123$. Therefore, for $n \ge 2$ we may transform the first two elements of $\id{n}$ into $123$ so that $\id{n} \equiv \id{n+1}$. Thus, $\id{2} \equiv \id{3} \equiv \id{4} \equiv \dots$.

Now, we will prove that all non-reverse identity permutations of length $n$ are equivalent to $\id{n}$ by inducting on $n \ge 3$. (The cases for $n=0,1,2$ are trivial.) The base case of $n=3$ may be checked computationally. Now, assume the statement is true for $n=k-1$, and suppose $\pi \not= \rid{n}$ is some given permutation of length $n$. If the first $n-1$ elements of $\pi$ are not order-isomorphic to $\rid{n-1}$, we apply the inductive hypothesis to the first $n-1$ elements, then the last $n-1$ elements, and finally the first $n-1$ elements again; the result is $\id{n}$. If the first $n-1$ elements of $\pi$ are order-isomorphic to $\rid{n-1}$, then we instead apply the inductive hypothesis on the last $n-1$ elements, the first $n-1$ elements, and finally the last $n-1$ elements. (We can not have both the first $n-1$ elements and the last $n-1$ elements of $\pi$ order-isomorphic to $\rid{n-1}$, because then $\pi=\rid{n}$.) The result of this procedure is again $\id{n}$, so that $\pi \equiv \id{n}$, as desired.

Because all permutations that are not reverse identities are equivalent to the identity of the same size, and all identities that are not also reverse identities are equivalent, we have that all non-reverse identity permutations are equivalent, completing the proof for $123 \vto \vstar 12$.

Note that taking the reverse complements of each permutation in the classes described above results in exactly the same classes, so the result for $123 \vto 23\vstar$ is the same.
\end{proof}

\section{Switch with Neighbor and Drop: $123 \vto 2\vstar3$, $123 \vto \vstar13$, $123 \vto 13\vstar$, and $123 \vto 1\vstar2$}
\label{sec:switchNeighborDrop}

We first consider $123 \vto 13\vstar$, whose reverse complement is $123 \vto \vstar 13$:

\begin{lemma}
\label{lem:equivalentReplacement}
Two permutations $\pi$ and $\sigma$ of equal length are equivalent under $123 \vto 13\vstar$ if they are equivalent under $123 \vto 132$.
\end{lemma}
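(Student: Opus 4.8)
The plan is to show that a single application of the length-preserving replacement $123 \to 132$ (equivalently $132 \to 123$) can be reproduced by a short sequence of $123 \vto 13\vstar$ replacements that begins and ends at the same length. Since equivalence under $123 \vto 13\vstar$ is an equivalence relation, and since any $123 \vto 132$ equivalence between $\pi$ and $\sigma$ unwinds, by definition, into a finite chain of single-step replacements, it suffices to treat one such step; transitivity then stitches the steps together, and symmetry of $\equiv$ lets me restrict attention to the forward direction $123 \to 132$ (a backward $132 \to 123$ step is just a forward step read in reverse). Note also that because $123 \vto 132$ preserves length, the equal-length hypothesis is automatic.

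First I would fix notation for the single step. Suppose $\mu \to \mu'$ is one $123 \to 132$ replacement acting on a copy of $123$ at positions $i_1 < i_2 < i_3$ with values $y_1 < y_2 < y_3$; thus $\mu'$ agrees with $\mu$ everywhere except that the entries at positions $i_2$ and $i_3$ are exchanged, placing $y_3$ at $i_2$ and $y_2$ at $i_3$. I will reproduce this exchange with one length-raising replacement followed by one length-lowering replacement. For the first, apply $13\vstar \to 123$ to the increasing pair $(y_2,y_3)$ at positions $(i_2,i_3)$, inserting the $\vstar$ at a fresh position $j_3 > i_3$: this keeps $y_2$ at $i_2$, installs a new value $w$ with $y_2 < w < y_3$ at position $i_3$, and pushes $y_3$ out to position $j_3$. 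For the second, apply $123 \to 13\vstar$ to the copy of $123$ now sitting at positions $(i_1,i_2,j_3)$ with values $(y_1,y_2,y_3)$: this deletes $y_2$, slides $y_3$ back to position $i_2$, and drops position $j_3$. The spectator elements, untouched throughout, are shifted one slot right by the insertion at $j_3$ and then one slot left by the deletion of $j_3$, so they return to their original positions.

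The verification is that the resulting permutation is exactly $\mu'$. After the two moves, position $i_1$ holds $y_1$, position $i_2$ holds $y_3$, position $i_3$ holds $w$, and every other entry is as in $\mu$, so the only discrepancy from $\mu'$ is that the slot at $i_3$ now carries $w$ rather than $y_2$. This is the crux of the proof and the step I expect to be the main obstacle: I must guarantee that $w$ occupies exactly the rank that $y_2$ vacated, so that after the final renumbering the two permutations coincide rather than merely being order-isomorphic on the affected triple. I would secure this by exploiting the renumbering freedom in the first step of Definition~\ref{def:replacement}: the value $w$ produced by $13\vstar \to 123$ is ours to choose subject only to $y_2 < w < y_3$, and the rank slot left empty by deleting $y_2$ — the gap between $y_2$ and the next-larger surviving value — always lies inside $(y_2,y_3)$, since that next-larger value is at most $y_3$. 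Choosing $w$ in this slot makes $w$ comparable to every surviving element exactly as $y_2$ was, so the final permutation equals $\mu'$. Chaining these simulations across the steps of the $123 \vto 132$ equivalence then yields $\pi \equiv \sigma$ under $123 \vto 13\vstar$.
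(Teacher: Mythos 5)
Your proposal is correct and follows essentially the same route as the paper: both simulate a single $123 \to 132$ step by first applying $13\vstar \to 123$ to the pair $(y_2,y_3)$ to insert a new middle value just above $y_2$, then applying $123 \to 13\vstar$ to $(y_1,y_2,y_3)$ to delete $y_2$ and pull $y_3$ back to position $i_2$, handling the reverse direction by symmetry. The only differences are cosmetic (the paper places the $\vstar$ immediately after $i_3$ rather than at an arbitrary $j_3 > i_3$, and your explicit rank-gap argument for the choice of $w$ is the same normalization the paper encodes by writing the new value as $\pi_2+1$).
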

\begin{proof}
It suffices to show both directions of $123 \vto 132$ can be performed through a series of $123 \vto 13\vstar$ replacements. Suppose $\pi_1 < \pi_2 < \pi_3$ are three elements of $\pi$ that form a copy of 123. We show that we may transform $\pi_1\pi_2\pi_3$ into $\pi_1\pi_3\pi_2$:
\begin{align*}
\pi &\equiv \dots \pi_1\dots \pi_2  \dots \pi_3  \dots  \\
&\equiv \dots \pi_1\dots \pi_2  \dots  (\pi_2+1)(\pi_3+1) \dots  \tag*{[$\pi_2\pi_3 \to \pi_2(\pi_2+1)(\pi_3+1)$]}\\
 &\equiv \dots \pi_1\dots \pi_3  \dots \pi_2  \dots   \tag*{[$\pi_1\pi_2(\pi_3+1) \to \pi_1\pi_3$]}
\end{align*}
Thus, we may perform $123 \to 132$ replacements by a series of $123 \vto 13\vstar $ replacements. For the other direction, $132 \to 123$, we simply reverse the above process.
\end{proof}

By using this mechanism we may swap any two elements that are not left-to-right minima:

\begin{lemma}
\label{lem:swapNonLRMin}
Suppose a permutation $\pi$ and two of its non-left-to-right minimum elements are given. Order these two elements in decreasing order then drop the rightmost one, and call the result $\pi'$. Under $123\vto 13\vstar$, $\pi \equiv \pi'$.
\end{lemma}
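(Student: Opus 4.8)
The plan is to first restate $\pi'$ in a form that is easy to produce by replacements. Writing $M$ for the larger and $m$ for the smaller of the two given elements, the operation ``put them in decreasing order and drop the rightmost'' amounts to deleting $m$ and seating $M$ at the leftmost of the two original positions (all other elements keeping their relative order), so $|\pi'| = |\pi| - 1$, matching the length drop of $123 \vto 13\vstar$. Everything will rest on one observation about the replacement: a single forward $123 \to 13\vstar$ applied to a copy $y_1 y_2 y_3$ (values $y_1 < y_2 < y_3$) deletes the \emph{middle} value $y_2$, slides the \emph{top} value $y_3$ into the middle position, and leaves $y_1$ untouched as a ``witness.'' Hence any element smaller than a target, sitting to its left, can play the role of $y_1$; since $m$ is not a left-to-right minimum, such a witness $q < m$ always exists to the left of $m$.

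I would first dispatch the case where $M$ lies to the right of $m$ (the two elements form an ascent). Here one replacement suffices: apply $123 \to 13\vstar$ to the copy $q\,m\,M$. This deletes $m$, moves $M$ into $m$'s (left) position, and vacates $M$'s (right) position, which is exactly $\pi'$. Note this case uses only that $m$ is not a left-to-right minimum.

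The main obstacle is the opposite case, where $M$ lies to the left of $m$ (a descent): deleting $m$ as the middle of a copy would necessarily drag a larger element from the right of $m$ into $m$'s slot, giving the wrong permutation, and one cannot in general first swap $M$ past $m$ by a $123 \vto 132$ move (Lemma~\ref{lem:equivalentReplacement}), since that swap would require an element smaller than $m$ to the left of $M$, which need not exist. I would instead use a three-move expand--relocate--contract maneuver, in the spirit of the expansion in the proof of Lemma~\ref{lem:equivalentReplacement}. Let $p < M$ be a witness to the left of $M$ (it exists since $M$ is not a left-to-right minimum). Step (1): apply the reverse replacement $13\vstar \to 123$ to the ascent $p\,M$, placing the new star at a position to the right of $m$; this inserts a helper element $n$ (with $p < n < M$) in $M$'s old position and relocates $M$ to the right of $m$. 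Step (2): now that $M$ sits to the right of $m$, apply $123 \to 13\vstar$ to $q\,m\,M$ to delete $m$ and slide $M$ back into $m$'s former position. Step (3): apply $123 \to 13\vstar$ to $p\,n\,M$ to delete the helper $n$ and slide $M$ into the target left position. The net effect removes $m$ and seats $M$ at the leftmost of the two original positions, leaving all other elements untouched, which is precisely $\pi'$.

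The steps needing the most care are, in the descent case, checking that the helper $n$ and the temporary relocation of $M$ do not disturb the relative order of the remaining elements (so that after the three moves every element other than $m$ is back in its correct relative position), and confirming that the required $123$ and $13\vstar$ copies genuinely exist at each stage (in particular that $q$ stays to the left of $m$, that $p < n < M$, and that the three positions are nested correctly). This argument handles $123 \vto 13\vstar$ directly; where convenient the reverse-complement symmetry transfers it to $123 \vto \vstar13$.
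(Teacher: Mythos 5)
Your proposal is correct, and its core trick coincides with the paper's, but the decomposition is genuinely different. The paper fixes $\pi_1$, the rightmost left-to-right minimum to the left of the leftmost given element $\pi_2$, and splits on value: when the other element $\pi_3$ satisfies $\pi_3 > \pi_1$, the triple $\pi_1\pi_2\pi_3$ is a copy of $123$ or $132$, and the paper swaps via Lemma~\ref{lem:equivalentReplacement} when necessary and then applies a single forward move; when $\pi_3 < \pi_1$, it locates a left-to-right minimum $\pi_4$ between $\pi_2$ and $\pi_3$ and runs exactly your expand--relocate--contract maneuver, i.e.\ $[\pi_1\pi_2 \to \pi_1\pi_2(\pi_2+1)]$, then $[\pi_4\pi_3(\pi_2+1) \to \pi_4\pi_2]$, then $[\pi_1(\pi_2-1)\pi_2 \to \pi_1(\pi_2-1)]$, which is your steps (1)--(3) with $p=\pi_1$, $q=\pi_4$, $n$ the helper. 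You instead split on position (ascent versus descent): ascents get the one-move argument, and \emph{every} descent gets the three-move maneuver, including descents $\pi_1<\pi_3<\pi_2$ that the paper routes through the swap lemma. Your version buys self-containment --- it never invokes Lemma~\ref{lem:equivalentReplacement}, and your witnesses $p$ and $q$ are arbitrary smaller elements to the left, whose existence is immediate from the definition of non-left-to-right minimum, whereas the paper must argue that its specific witnesses $\pi_1$ and $\pi_4$ exist and have the right values. The paper's version buys economy in the $\pi_3>\pi_1$ sub-case by reusing an already-proved lemma rather than re-deriving the helper trick. The checks you flag at the end (nested positions, persistence of $p<n<M$ and of $q$ to the left of $m$, and that uninvolved elements keep their relative order) all go through, since each replacement step fixes the relative order of the elements not in the chosen copy; so your plan is complete as stated.
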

\begin{proof}
Let $\pi = \dots \pi_1 \dots \pi_2 \dots \pi_3 \dots $ and its two non-left-to-right minima be $\pi_2$ and $\pi_3$ (one is not necessarily larger than the other), where $\pi_1$ is the rightmost left-to-right minimum to the left of $\pi_2$. It suffices to show $\pi'$ can be produced with $123\vto 13\vstar$ replacements on $\pi$.

If $\pi_2 < \pi_1$, then $\pi_2$ itself is a left-to-right minimum; therefore, we must have $\pi_2 > \pi_1$. 

Now, if $\pi_3 > \pi_1$, then $\pi_1\pi_2\pi_3$ form a copy of either 123 or 132, and thus $\pi_1$ and $\pi_2$ can be swapped if necessary via Lemma \ref{lem:equivalentReplacement} so that they are in increasing order. Then, applying $123 \to 13\vstar$ produces $\pi'$.

Otherwise, $\pi_3 < \pi_1$. In this case, we look to a fourth element for use in replacements: a left-to-right minimum between $\pi_2$ and $\pi_3$ called $\pi_4$. We can find $\pi_4$ because a left-to-right minimum must exist between $\pi_1$ and $\pi_3$, or else $\pi_3$ is itself a left-to-right minimum. Furthermore, $\pi_4$ must be to the right of $\pi_2$, or else $\pi_1$ was chosen incorrectly. We note that we must have $\pi_4 < \pi_3 < \pi_1 < \pi_2$, so we can perform the following operations. Note that as elements are dropped or added with value less than another element, the latter's value will change by one.
\begin{align*}
\pi &\equiv \dots \pi_1\dots \pi_2  \dots \pi_4  \dots \pi_3  \dots  \\
&\equiv \dots \pi_1\dots \pi_2  \dots \pi_4 \dots \pi_3(\pi_2+1) \dots  \tag*{[$\pi_1\pi_2 \to \pi_1\pi_2(\pi_2+1)$]}\\
&\equiv \dots (\pi_1-1) \dots (\pi_2-1)  \dots \pi_4 \dots \pi_2 \dots  \tag*{[$\pi_4\pi_3(\pi_2+1) \to \pi_4\pi_2$]}\\
&\equiv \dots (\pi_1-1) \dots (\pi_2-1)  \dots \pi_4 \dots  \tag*{[$\pi_1(\pi_2-1)\pi_2 \to \pi_1(\pi_2-1)$]}
\end{align*}
This is indeed $\pi'$.
\end{proof}

While non-left-to-right minima can be manipulated as shown, there are two properties of the set of non-left-to-right minima that must remain unchanged, in addition to the set of left-to-right minima:

\begin{lemma}
\label{lem:invariantProperties}
Under $123 \vto 13\vstar$, two permutations $\pi$ and $\sigma$ are equivalent only if they have the following equal: 
\begin{itemize}
\item the number of left-to-right minima, 
\item the position of the leftmost non-left-to-right minimum, and
\item the largest value (relative to the left-to-right minima) of non-left-to-right minima.
\end{itemize}
\end{lemma}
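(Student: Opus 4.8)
The three quantities listed are meant to be invariants of the relation $\equiv$, so the natural strategy is to verify that a single replacement leaves each one unchanged and then propagate along the chain witnessing equivalence. Concretely, if $\pi \equiv \sigma$ then by Definition~\ref{def:equivalent} there is a sequence of permutations joined by $123 \to 13\vstar$ and $13\vstar \to 123$ steps; since equality of each quantity is transitive, and a $13\vstar \to 123$ step is just a $123 \to 13\vstar$ step read backwards, it suffices to show that applying $123 \to 13\vstar$ to a single copy of $123$ preserves all three quantities. The first task is therefore to unwind Definition~\ref{def:replacement} for this particular replacement: if the chosen copy of $123$ consists of elements $y_1 < y_2 < y_3$ at positions $p_1 < p_2 < p_3$, the effect on the one-line notation is to keep $y_1$ where it is, delete $y_2$, slide $y_3$ from position $p_3$ into position $p_2$, drop the resulting $\vstar$ at $p_3$, and renumber. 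I would record at the outset the key structural fact that $y_2$ and $y_3$ are both non-left-to-right minima, since each has the strictly smaller $y_1$ somewhere to its left.

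The engine of the whole argument is a single observation: this operation leaves the prefix-minimum value unchanged at every surviving position. Because $y_1$ lies to the left of $p_2$ and $p_3$ and is smaller than both $y_2$ and $y_3$, neither $y_2$ nor $y_3$ is ever a running minimum; hence deleting $y_2$, removing $y_3$ from $p_3$, and reinserting $y_3$ at $p_2$ cannot alter the running minimum seen at any position, and renumbering preserves all relative order. Since an element is a left-to-right minimum exactly when it equals the running minimum up to its position, this shows the left-to-right minima of $\pi$ are carried bijectively onto those of $\sigma$, the only change to the non-left-to-right minima being the deletion of $y_2$. In particular the number of left-to-right minima (the first quantity) is preserved.

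The remaining two quantities then follow from this correspondence. For the position of the leftmost non-left-to-right minimum: since $y_2$ is itself a non-left-to-right minimum at position $p_2$, this leftmost position is some index $q \le p_2$. If $q < p_2$ the index $q$ lies to the left of the dropped star and is untouched; if $q = p_2$, then after the slide position $p_2$ is occupied by $y_3$, which is again a non-left-to-right minimum, while every earlier element remains a left-to-right minimum---so in either case the leftmost non-left-to-right minimum stays at the same index. For the largest value of a non-left-to-right minimum relative to the left-to-right minima: because $y_2 < y_3$ and $y_3$ is also a non-left-to-right minimum, $y_2$ is not the largest non-left-to-right minimum, so deleting it does not disturb the maximal such element $M$; and since $M$ together with all left-to-right minima survive with their relative order intact, the number of left-to-right minima lying below $M$---which is the order-theoretic reading of this ``relative value''---is unchanged.

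I expect the main obstacle to be two-fold, and both parts lie in making the single-step analysis airtight rather than in any long computation. First, the replacement may legitimately be applied to any copy of $123$, not only the favorably placed copies exploited in Lemmas~\ref{lem:equivalentReplacement} and~\ref{lem:swapNonLRMin}, so the prefix-minimum invariance must be argued without assuming anything about the relative placement of $y_1, y_2, y_3$ beyond what Definition~\ref{def:replacement} forces. Second, the third quantity is the delicate one: because dropping the $\vstar$ forces a global renumbering, ``value relative to the left-to-right minima'' must be read purely order-theoretically (as the count of left-to-right minima below the element) for the invariance even to be well posed, and one must check that the deleted element $y_2$ is never the record-holder---which is exactly where $y_3 > y_2$ being a surviving non-left-to-right minimum is used.
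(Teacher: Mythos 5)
Your proposal is correct and takes essentially the same route as the paper: reduce to a single application of $123 \to 13\vstar$, observe that the replaced elements $y_2$ and $y_3$ are non-left-to-right minima because $y_1$ sits to their left, and then verify each quantity, with the same case analysis on whether the leftmost non-left-to-right minimum is at position $p_2$ and the same observation that $y_2$ can never be the largest non-left-to-right minimum since $y_3$ survives and exceeds it. Your pointwise prefix-minimum invariance is a slightly more unified packaging of what the paper establishes through three separate substring order-isomorphism arguments, but the mathematical content coincides.
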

\begin{proof}
To show that $\pi \equiv \sigma$ only if the they share the three above properties, it suffices to show that $123 \vto 13\vstar$ preserves these properties; moreover, only one direction is necessary to prove because then the reverse must also preserve them. Thus, we consider the $123 \to 13\vstar$ direction applied to the elements $\phi_1 < \phi_2 < \phi_3$ (from left to right) of an arbitrary permutation $\phi$ to produce $\phi'_1$ and $\phi'_3$ in the result $\phi'$ (i.e. $[\phi_1\phi_2\phi_3 \to \phi'_1\phi'_3]$). Also, call $k_1$ the position of $\phi_1$ and $k_2$ that of $\phi_2$ when counting from the left. 

We will undertake the first property by breaking the permutations at the $k_1$-th position. The substrings of the first $k_1$ elements of $\phi$ and $\phi'$ are order-isomorphic, so the two substrings contain the same number of left-to-right minima. On the other hand, the left-to-right minima to the right of $\phi_1$ in $\phi$ and to the right of $\phi'_1$ in $\phi$ have values less than $\phi_1$ and $\phi'_1$. Furthermore, the substring of $\phi$ consisting of all elements except those to the right of and greater than $\phi_1$ is order-isomorphic to the corresponding substring of elements of $\phi'$ that are not both to the right of and greater than $\phi'_1$. Therefore, the numbers of left-to-right minima to the right of the $k_1$-th element in each of $\phi$ and $\phi'$ are the same. We conclude that $123 \to 13\vstar$ preserves the number of left-to-right minima.

To prove the second property we will use the fact that the first $k_2-1$ elements of $\phi$ and $\phi'$ are order-isomorphic. The leftmost non-left-to-right minimum of $\phi$ is at a position of at most $k_2$, because $\phi_2$ is a non-left-to-right minimum. Similarly, the leftmost non-left-to-right minimum of $\phi'$ has position at most $k_2$. If $\phi_2$ is indeed the leftmost non-left-to-right minimum, then $\phi'_3$ will be the leftmost non-left-to-right minimum in $\phi'$ at the same position. Otherwise, the leftmost non-left-to-right minimum is in the first $k_2-1$ elements and thus in the same position in $\phi$ and $\phi'$.

For the third property, it should be noted that when discussing a value relative to those of left-to-right minima we are discussing the number of left-to-right minima less than (or greater than) that value; such a notion is only valid when the number of left-to-right minima is constant (which was shown above). Consider the greatest non-left-to-right minimum in $\phi$ and $\phi'$, which we will call $\phi_i$ and $\phi'_i$ respectively. Note that $\phi_i$ can not be $\phi_2$ because $\phi_3$ is a greater non-left-to-right minimum. Then, the relative order of values of the set of elements from $\phi$ consisting of all left-to-right minima and $\phi_i$ is the same as that of the set of elements from $\phi'$ consisting of all of its left-to-right minima and $\phi'_i$, as desired.
\end{proof}

In terms of three properties we may exactly characterize the equivalence classes:

\begin{theorem}
\label{thm:switchNeighborDropClassesA}
Under $123 \vto 13\vstar$, there exists a distinct equivalence class for every triple of integers $(m, p, v)$, with $1 \le p,v \le m$, consisting of all permutations $\pi$ with the following properties:
\begin{itemize}
\item $\pi$ has $m$ left-to-right minima,
\item the position (from the left) of the leftmost non-left-to-right minimum is $p+1$
\item the value of the largest non-left-to-right minima is less than those of $v$ left-to-right minima
\end{itemize}
In addition, each reverse identity permutation is in a class only containing itself. There are no other classes.
\end{theorem}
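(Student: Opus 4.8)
The necessity of the three listed quantities is already in hand: by Lemma~\ref{lem:invariantProperties}, any two equivalent permutations agree on the number of left-to-right minima, on the position of the leftmost non-left-to-right minimum, and on the largest value (relative to the left-to-right minima) of the non-left-to-right minima. Hence each equivalence class is contained in exactly one of the sets indexed by a triple $(m,p,v)$, and two distinct triples can never share a class. The plan is therefore to establish the converse, that any two permutations sharing a triple $(m,p,v)$ are actually equivalent, and to handle the reverse identities separately; together these pin down the classes exactly.

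First I would dispose of the reverse identities and the completeness claim. A permutation is a reverse identity precisely when every element is a left-to-right minimum, i.e.\ when it has no non-left-to-right minimum, in which case $p$ and $v$ are undefined and the permutation falls outside the $(m,p,v)$ framework. Moreover a reverse identity is strictly decreasing and thus contains no ascent, so it admits no copy of $123$ and no copy of $13\vstar$; neither direction of the replacement applies and it is isolated. Conversely, every non-reverse-identity permutation has at least one non-left-to-right minimum, so its triple $(m,p,v)$ is well defined; here $p$ records how many leading positions are left-to-right minima and $v$ locates the largest non-left-to-right minimum among the left-to-right minima, so that $1 \le p,v \le m$. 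Exhibiting one explicit permutation realizing each admissible triple then shows every listed class is nonempty, and the necessity direction shows there are no further classes.

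For sufficiency I would reduce an arbitrary permutation with invariants $(m,p,v)$ to a single canonical representative $C(m,p,v)$ depending only on the triple. The engine is Lemma~\ref{lem:swapNonLRMin}: applied to two non-left-to-right minima, it returns an equivalent permutation in which the pair has been merged into the larger one, decreasing the number of non-left-to-right minima by exactly one while leaving the left-to-right minima untouched; since the result is equivalent, Lemma~\ref{lem:invariantProperties} guarantees that the full triple $(m,p,v)$ is preserved at every step. Calling on Lemma~\ref{lem:equivalentReplacement} whenever a length-preserving rearrangement of non-left-to-right minima is needed, I would repeatedly merge until only the structurally forced non-left-to-right minima survive: the largest one, which witnesses $v$, together with, when it is a distinct element, a minimal non-left-to-right minimum occupying position $p+1$, which witnesses $p$. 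The left-to-right minima then form a decreasing skeleton whose values are forced by $m$ and by the placement of the surviving non-left-to-right minima. Because the reduced permutation depends only on $(m,p,v)$, any two permutations sharing a triple reduce to the same $C(m,p,v)$ and are equivalent.

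The main obstacle is this final reduction: verifying that the merging terminates at a configuration uniquely determined by $(m,p,v)$ and never stalls. The delicate point is the interaction between the position invariant $p$ and the value invariant $v$. When $v=m$ the largest non-left-to-right minimum exceeds all left-to-right minima and may itself be seated at the forced position $p+1$, leaving a single non-left-to-right minimum; but when $v<m$ the largest non-left-to-right minimum lies below some left-to-right minimum and therefore cannot simultaneously be the leftmost non-left-to-right minimum, so at least two such elements must remain. Confirming that Lemma~\ref{lem:swapNonLRMin} can always be applied so as to drive the permutation into exactly this minimal configuration, and that the left-to-right-minima skeleton together with the one or two surviving non-left-to-right minima is rigidly fixed by the triple, is where the real work lies.
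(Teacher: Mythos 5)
Your overall strategy is the same as the paper's: necessity of the triple via Lemma~\ref{lem:invariantProperties}, isolation of reverse identities because they admit no copy of $123$ or of $13\vstar$, and sufficiency by using Lemma~\ref{lem:swapNonLRMin} to drive any permutation to a canonical representative determined by $(m,p,v)$. But the step you yourself flag as ``where the real work lies'' is exactly where your proposal goes wrong. You claim that when $v<m$ the largest non-left-to-right minimum ``cannot simultaneously be the leftmost non-left-to-right minimum, so at least two such elements must remain.'' This is false. Take $\pi = 312$: its left-to-right minima are $3$ and $1$ (so $m=2$), its unique non-left-to-right minimum is $2$ at position $3$ (so $p=2$), and $2$ exceeds exactly one left-to-right minimum (so $v=1<m$). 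Here the largest non-left-to-right minimum lies below a left-to-right minimum and nevertheless is the leftmost (indeed the only) one. Consequently your proposed two-survivor minimal configuration is not the right target; worse, a configuration with two surviving non-left-to-right minima is not even determined by the triple (e.g.\ $3124$ and $3142$ share the triple $(2,2,2)$), so your canonical form $C(m,p,v)$ would not be well defined in that case.

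The paper's proof avoids this entirely: each application of Lemma~\ref{lem:swapNonLRMin} shortens the permutation by one element while producing an equivalent permutation, which by Lemma~\ref{lem:invariantProperties} still has $m$ left-to-right minima; hence the number of non-left-to-right minima drops by exactly one each time, and one iterates until a \emph{single} non-left-to-right minimum remains, giving a permutation of length $m+1$. There is exactly one permutation of length $m+1$ with triple $(m,p,v)$: the one whose $(p+1)$-th entry is $v+1$ and whose remaining entries decrease. The compatibility issue you worry about (whether one element can witness both $p$ and $v$) never arises, because realizable triples automatically satisfy $p+v \ge m+1$: every left-to-right minimum exceeding the largest non-left-to-right minimum must occur among the first $p-1$ positions (all left-to-right minima at positions $\ge p$ are smaller than the leftmost non-left-to-right minimum), so $m-v \le p-1$. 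With that observation, the single-element canonical form always exists and is unique, and both $\pi$ and $\sigma$ reduce to it, completing the sufficiency argument your proposal leaves open.
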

\begin{proof}
Note that if $\pi$ is a reverse identity permutation, then it can not undergo either direction of $123 \vto 13\vstar$, so it must be isolated.

For the remainder of the theorem it suffices to show that, given two non-reverse-identity permutations $\pi$ and $\sigma$, $\pi \equiv \sigma$ if and only if they have the same triple $(m,p,v)$. The only if direction was shown in Lemma \ref{lem:invariantProperties}. We will prove the other direction through the use of a primitive permutation.

Suppose both $\pi$ and $\sigma$ have triple $(m,p,v)$. From Lemma \ref{lem:invariantProperties}, any permutation equivalent to $\pi$ must have the same number of left-to-right minima. Also, it must have at least one non-left-to-right minimum. Thus, the shortest permutation equivalent to $\pi$ must have at least $m+1$ elements. In fact, there is exactly one permutation of this length: the permutation of length $m+1$ whose $(p+1)$-th element has value $v+1$ and remaining elements are in decreasing order. We can indeed construct this permutation by applying Lemma \ref{lem:swapNonLRMin} repeatedly to any pair of non-left-to-right minima until the resulting permutation $\tau$ has length $m + 1$. 

In a similar manner, we may construct the primitive permutation of $\sigma$, which must also be $\tau$ because it has the same $(m,p,v)$ triple. Thus, $\pi$ and $\sigma$ have the same primitive permutation and must be equivalent.
\end{proof}

The remaining replacements $123 \vto 1\vstar 2$ and $123\vto 2\vstar3$ (which are reverse complements) have similar equivalence classes. Following logic analogous to Lemma \ref{lem:equivalentReplacement}, Lemma \ref{lem:swapNonLRMin}, Lemma \ref{lem:invariantProperties}, and Theorem \ref{thm:switchNeighborDropClassesA}, we may find that equivalence under $123 \vto 1\vstar2$ implies and is implied by equivalence under $123 \vto 132$, with which we can identify three properties that are necessary and sufficient to infer equivalence. The result classifying the equivalence classes under $123 \vto 1\vstar2$ is stated below:

\begin{theorem}
\label{thm:switchNeighborDropClassesB}
Under $123 \vto 1\vstar 2$, there exists a distinct equivalence class for every triple $(m, p, v)$, with $1 \le p,v \le m$, consisting of all permutations $\pi$ with the following properties:
\begin{itemize}
\item $\pi$ has $m$ left-to-right minima,
\item the value of the smallest non-left-to-right minima is greater than those of $v$ left-to-right minima
\item the position (from the right) of the rightmost non-left-to-right minimum is $p+1$
\end{itemize}
In addition, each reverse identity permutation is isolated. There are no other classes.
\end{theorem}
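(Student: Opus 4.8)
The plan is to reproduce, in mirrored form, the four-step development already carried out for $123 \vto 13\vstar$, replacing each of Lemma~\ref{lem:equivalentReplacement}, Lemma~\ref{lem:swapNonLRMin}, Lemma~\ref{lem:invariantProperties}, and Theorem~\ref{thm:switchNeighborDropClassesA} by its analogue with the left/largest conventions swapped for right/smallest ones. First I would prove the analogue of Lemma~\ref{lem:equivalentReplacement}: that any $123 \to 132$ rearrangement of an ascending triple $\pi_1 < \pi_2 < \pi_3$ (and its reverse) can be realized by a short sequence of $123 \vto 1\vstar2$ replacements. As in the original, this is done by an expand-then-contract maneuver — introducing an auxiliary element via the $1\vstar2 \to 123$ direction and then removing a different element via the $123 \to 1\vstar2$ direction, so that the net effect transposes $\pi_2$ and $\pi_3$. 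The only real care here is that the $\vstar$ in $1\vstar2$ occupies the middle rather than the final position, so the explicit insertion and deletion sites differ from those in Lemma~\ref{lem:equivalentReplacement} and must be recomputed; this yields the basic ``swap'' tool.

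Next I would establish the analogue of Lemma~\ref{lem:swapNonLRMin}, showing that any two non-left-to-right-minimum elements may be reordered and one of them deleted, in the orientation dual to the one used there. Since the invariant to be preserved in this case is the \emph{smallest} (rather than largest) value of the non-left-to-right minima, the construction should always discard the \emph{larger} of the two non-left-to-right minima, so that the minimum among them survives. As in the original proof, the delicate case occurs when the third element supplying the requisite $123$ pattern lies below the governing left-to-right minimum, forcing one to bring in an auxiliary left-to-right minimum and run a short replacement chain; this case analysis must be redone with the right-to-left and smallest-value conventions. Repeated application then collapses any non-reverse-identity permutation with $m$ left-to-right minima down to a permutation of length $m+1$ having a single non-left-to-right minimum, namely the permutation whose element at position $m+1-p$ from the left has value $v+1$ and whose remaining $m$ entries decrease; this is the primitive permutation of the class.

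I would then prove the analogue of Lemma~\ref{lem:invariantProperties}: that $123 \to 1\vstar2$ preserves (i) the number of left-to-right minima, (ii) the position, counted from the right, of the rightmost non-left-to-right minimum, and (iii) the value, relative to the left-to-right minima, of the smallest non-left-to-right minimum. As before it suffices to check a single direction, and the argument proceeds by cutting the permutation at the positions of the two surviving elements of the replacement and observing that the relevant flanking substrings remain order-isomorphic. Finally I would assemble the theorem exactly as in Theorem~\ref{thm:switchNeighborDropClassesA}: a reverse identity contains no ascending pair, hence no copy of $123$ and no copy of $1\vstar2$, so neither direction of the replacement applies and it is isolated; for two non-reverse-identity permutations, the invariants (i)--(iii) give the only-if direction, while agreement of the triple $(m,p,v)$ forces both to reduce to the same primitive permutation and hence to be equivalent.

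I expect the main obstacle to be the invariance proof, and specifically property (iii). Re-deriving it correctly in the mirrored frame — tracking the \emph{smallest} non-left-to-right minimum and positions measured \emph{from the right} — is where a naive transcription of the $13\vstar$ argument will break, since the roles of largest/smallest and left/right are interchanged and the determination of which flanking substrings stay order-isomorphic after a replacement shifts accordingly. A tempting shortcut would be to transport Theorem~\ref{thm:switchNeighborDropClassesA} through a single global symmetry, but the reverse-complement symmetry links $123 \vto 1\vstar2$ only to $123 \vto 2\vstar3$, not to $123 \vto 13\vstar$, so no such shortcut is evidently available and the direct, if routine, re-derivation seems unavoidable.
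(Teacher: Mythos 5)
Correct, and this is essentially the paper's own approach: the paper gives no separate proof of Theorem~\ref{thm:switchNeighborDropClassesB}, saying only that one should follow logic analogous to Lemma~\ref{lem:equivalentReplacement}, Lemma~\ref{lem:swapNonLRMin}, Lemma~\ref{lem:invariantProperties}, and Theorem~\ref{thm:switchNeighborDropClassesA}, which is precisely the mirrored re-derivation you outline, with the correct dual conventions (retain the smaller of the two non-left-to-right minima, at the later position; track the smallest value and the rightmost position). One aside: a symmetry shortcut does exist, just not via reverse complement --- inversion $\pi \mapsto \pi^{-1}$ interchanges positions with values and conjugates $123 \vto 13\vstar$ into $123 \vto 1\vstar2$ (the former deletes the middle value and the last position of a $123$ copy, the latter the middle position and the largest value), so Theorem~\ref{thm:switchNeighborDropClassesB} could also be obtained by transporting Theorem~\ref{thm:switchNeighborDropClassesA} through inversion, with $p$ and $v$ trading roles.
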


The results for $123 \vto \vstar13$ and $123 \vto 2\vstar3$ can be found by taking the reverse complement of each statement in Theorems \ref{thm:switchNeighborDropClassesA} and \ref{thm:switchNeighborDropClassesB}, respectively. In particular, all instances of left-to-right minima become right-to-left maxima.

\subsection*{Summary}
\label{subsec:summary}

Table \ref{tab:summary} summarizes the characterization of the equivalence classes for each of the 18 considered replacements. For replacements whose classes are described in the form $\{ \pi \mid \pi \equiv \tau\}$ for a certain $\tau$, refer to their respective sections for algorithms that produce the $\tau$ corresponding to a given $\pi$.

For the sake of abbreviation, we use LR and RL for left-to-right and right-to-left, respectively. In addition, minima and maxima are written min and max, respectively.

\begin{table}[h]
\centering
\begin{tabular}{|c|c|c|c|}
\hline
Category & $\beta$ & \#  Classes & Equivalence Classes \\ \hline \hline
\multirow{9}{70pt}[11pt]{\center{$\beta$ decreasing}} & $\vstar32$ & 5 & $\{ \emptyset \}, \{ 1 \}, \{ 12 \}, \{ 123, 21 \}, \{ \text{all else} \} $ \\
 & $21\vstar$ & 5 & $\{ \emptyset \}, \{ 1 \}, \{ 12 \}, \{ 123, 21 \}, \{ \text{all else} \} $ \\
 & $\vstar31$ & 5 & $\{ \emptyset \}, \{ 1 \}, \{ 12 \}, \{ 123, 21 \}, \{ \text{all else} \} $ \\
 & $2\vstar1$ & 5 & $\{ \emptyset \}, \{ 1 \}, \{ 12 \}, \{ 123, 21 \}, \{ \text{all else} \} $ \\
 & $3\vstar2$ & 5 & $\{ \emptyset \}, \{ 1 \}, \{ 12 \}, \{ 123, 21 \}, \{ \text{all else} \} $ \\
 & $31\vstar$ & 5 & $\{ \emptyset \}, \{ 1 \}, \{ 12 \}, \{ 123, 21 \}, \{ \text{all else} \} $ \\
 & $\vstar21$ & 5 & $\{ \emptyset \}, \{ 1 \}, \{ 12 \}, \{ 123, 21 \}, \{ \text{all else} \} $ \\
 & $3\vstar1$ & 5 & $\{ \emptyset \}, \{ 1 \}, \{ 12 \}, \{ 123, 21 \}, \{ \text{all else} \} $ \\
 & $32\vstar$ & 5 & $\{ \emptyset \}, \{ 1 \}, \{ 12 \}, \{ 123, 21 \}, \{ \text{all else} \} $ \\ \hline
\multirow{3}{70pt}[11pt]{\center{Drop Only}} & $\vstar23$ & $\infty$ & $\{ \pi \mid \pi \equiv \tau \} \mid \tau \text{ avoids 123}$ \\
 & $1\vstar3$ & $\infty$ & $\{ \pi \mid \pi \equiv \tau \} \mid \tau \text{ avoids 123}$ \\
 & $12\vstar$ & $\infty$ & $\{ \pi \mid \pi \equiv \tau \} \mid \tau \text{ avoids 123}$ \\ \hline
\multirow{2}{70pt}[17.5pt]{\center{Shift Right, Shift Left}} & $\vstar12$ & $\infty$ & $\{ \rid{n} \} \mid n \in \mathbb{Z}_{\ge 0},  \{\text{all else}\} $ \\
 & $23\vstar$ & $\infty$ & $\{ \rid{n} \} \mid n \in \mathbb{Z}_{\ge 0},  \{\text{all else}\} $ \\ \hline
\multirow{4}{70pt}[11pt]{\center{Switch with Neighbor and Drop}} & $1\vstar2$ & $\infty$ & $ \{ \pi \mid \pi \equiv \tau\} \mid \tau \text{ has 0 or 1 non-LR min}$ \\
 & $13\vstar$ & $\infty$ & $ \{ \pi \mid \pi \equiv \tau\} \mid \tau \text{ has 0 or 1 non-LR min}$ \\
 & $\vstar13$ & $\infty$ & $ \{ \pi \mid \pi \equiv \tau\} \mid \tau \text{ has 0 or 1 non-RL max}$ \\
 & $2\vstar3$ & $\infty$ & $ \{ \pi \mid \pi \equiv \tau\} \mid \tau \text{ has 0 or 1 non-RL max}$ \\ \hline
\end{tabular}

\caption{Summary of Classes Under Replacements of the Form $123 \vto \beta$}
\label{tab:summary}

\end{table}

\subsection*{Acknowledgments}
\label{subsec:acknowledgements}

The author would like to thank his mentor Dr.\ Tanya Khovanova for invaluable guidance and greatly appreciates the time she has invested in the project. He would also like to thank Prof.\ James Propp for originally suggesting the project and providing useful feedback on the topic of the paper. Finally, the author is grateful to MIT PRIMES for their support and arranging the conditions for this research to take place.

\end{document}